\newcommand\cyr{%
  \renewcommand\rmdefault{wncyr}%
  \renewcommand\sfdefault{wncyss}%
  \renewcommand\encodingdefault{OT2}%
  \normalfont
  \selectfont}
\DeclareTextFontCommand{\textcyr}{\cyr}
\newcommand\lie[1]{{\mathfrak #1}}
\newcommand\iso{{\,\cong\,}}
\newcommand\tensor{{\otimes}}
\newtheorem{Theorem}{Theorem} 
\newtheorem{Proposition}{Proposition} 
\newtheorem{Lemma}{Lemma}
\newtheorem{Corollary}{Corollary}
\newtheorem*{Corollary*}{Corollary}
\newtheorem*{Lemma*}{Lemma}
\newtheorem*{Theorem*}{Theorem}
\theoremstyle{remark}
\newtheorem{Example}{Example}
\newtheorem{Remark}{Remark}
\newcommand\reals{{\mathbb R}}
\newcommand\complexes{{\mathbb C}}
\newcommand\integers{{\mathbb Z}}
\newcommand\inv{\mathrm{inv}}
\newcommand\Inv{\mathrm{Inv}}
\newcommand\Mat{\mathrm{Mat}}
\newcommand\GL{\mathrm{GL}}
\newcommand\bkdot{\odot_0}
\newcommand\wt{\mathrm{wt}}
\newcommand\Evec{\longrightarrow}
\newcommand\NEvec{\raisebox{-1ex}{\begin{turn}{60}$\longrightarrow$\end{turn}}}
\newcommand\SEvec{\raisebox{2ex}{\begin{turn}{-60}$\longrightarrow$\end{turn}}}
\newcommand\NWvec{\raisebox{2ex}{\begin{turn}{-60}$\longleftarrow$\end{turn}}}
\newcommand\SWvec{\raisebox{-1ex}{\begin{turn}{60}$\longleftarrow$\end{turn}}}
\theoremstyle{plain}
\theoremstyle{remark}
\newtheoremstyle{TheoremNum}
{\topsep}{\topsep}              %%% space between body and thm
{\itshape}                      %%% Thm body font
{}                              %%% Indent amount (empty = no indent)
{\bfseries}                     %%% Thm head font
{.}                             %%% Punctuation after thm head
{ }                             %%% Space after thm head
{\thmname{#1}\thmnote{ \bfseries #3}}%%% Thm head spec
\theoremstyle{TheoremNum}
\newtheorem{thmn}{Theorem}
\renewenvironment{quotation}
{\list{}{%\setlength\listparindent{0.5em}%
    \setlength\itemindent{0em}%
    \setlength\leftmargin{1.5em}
    \setlength\rightmargin{1.5em}
  }%
\item[]}
{\endlist}
\newcommand\To{\longrightarrow}
\newcommand\defn[1]{{\bf #1}} % maybe should be \em
\newcommand\junk[1]{}
\newcommand\Fl{F\ell }
\newcommand\oneornot{{1]}}
\newcommand\onecompl{{1^c}}
\newcommand\BDRY{{\tt BDRY}}
\newcommand\LRn{\BDRY(n)}
\newcommand\codim{\operatorname{codim}}
\begin{document}
\pagestyle{plain}

\title{Product and puzzle formulae \\ for $GL_n$ Belkale-Kumar coefficients}

\author{Allen Knutson}
\thanks{AK was supported by NSF grant 0303523.}
\email{allenk@math.cornell.edu}
\author{Kevin Purbhoo}
\thanks{KP was supported by an NSERC discovery grant.}
\email{kpurbhoo@math.uwaterloo.ca}
\date{\today}

\maketitle

\begin{abstract}
  The Belkale-Kumar product on $H^*(G/P)$ is a degeneration of the usual 
  cup product on the cohomology ring of a generalized flag manifold. % $G/P$.
  In the case $G=GL_n$, it was used by N. Ressayre to determine the 
  regular faces of the Littlewood-Richardson cone.

  We show that for $G/P$ a $(d-1)$-step flag manifold, each
  Belkale-Kumar structure constant is a product of $d\choose 2$
  Littlewood-Richardson numbers, for which there are many formulae
  available, e.g. the puzzles of [Knutson-Tao '03].  
  This refines previously known factorizations into $d-1$ factors.
  We define a new family of puzzles to assemble these to give a direct
  combinatorial formula for Belkale-Kumar structure constants.

  These ``BK-puzzles'' are related to extremal honeycombs, as in
  [Knutson-Tao-Woodward~'04]; using this
  relation we give another proof of Ressayre's result.

  Finally, we describe the regular faces of the Littlewood-Richardson
  cone on which the Littlewood-Richardson number is always $1$;
  they correspond to nonzero Belkale-Kumar coefficients on partial
  flag manifolds where every subquotient has dimension $1$ or $2$.
\end{abstract}

{\Small
  \setcounter{tocdepth}{1}
  \tableofcontents
}

\markright{\MakeUppercase{
    Product and puzzle formulae for $GL_n$ Belkale-Kumar coefficients }}

\section{Introduction, and statement of results}

Let $0 = k_0 < k_1 < k_2 < ... < k_d = n$ be a sequence of natural numbers,
and $\Fl(k_1,\ldots,k_d)$ be the space of partial flags
$\{ (0 < V_1 < \ldots < V_d = \complexes^n) : \dim V_i = k_i\}$
in $\complexes^n$.

Schubert varieties on $\Fl(k_1, \dots, k_d)$ are indexed by certain words
$\sigma = \sigma_1\dots\sigma_n$ on a totally ordered alphabet
of size $d$ (primarily, we will use $\{1,2,\dots,d\}$).  
The \defn{content} of $\sigma$ is the sequence $(n_1, n_2, \dots, n_d)$,
where $n_i$ is the number of $i$s in $\sigma$.  We associate to $\sigma$
a permutation $w_\sigma$, whose one-line notation lists the positions 
of the $1$s in order, followed by the positions of the $2$s, and so on
(e.g. $w_{12312} = 14253$;  
if $\sigma$ is the one-line notation of a permutation, i.e. $\forall i n_i=1$,
then $w_\sigma = \sigma^{-1}$).  
We say that $(p,q)$ is an \defn{inversion} 
of $\sigma$ if $p < q$, $w_\sigma(p) > w_\sigma(q)$; more specifically 
$(p,q)$ is an \defn{$ij$-inversion} if
additionally we have $\sigma_q = i > j = \sigma_p$.  Let $\inv(\sigma)$
(resp. $\inv_{ij}(\sigma)$) denote the number of inversions 
(resp. $ij$-inversions) of $\sigma$.

Given a word $\sigma$ of content $(k_1, k_2 -k_1, \dots, k_d-k_{d-1})$, and 
a complete flag $F_\bullet$, the 
\defn{Schubert variety}
$X_\sigma(F_\bullet) \subset \Fl(k_1, \dots, k_d)$ is
defined to be the closure of
\[
  \big\{ (0 < V_1 < \ldots < V_d)\, : \,
  (V_{\sigma_p} \cap F_{n-p+1}) \neq (V_{\sigma_p} \cap F_{n-p})
  \text{ for $p=1, \dots, n$}
  \big\}\,.
\]
(In many references, this is the Schubert variety associated to $w_\sigma$.)
With these conventions, the codimension of $X_\sigma(F_\bullet)$ is
$\inv(\sigma)$; hence the corresponding Schubert class,
denoted $[X_\sigma]$, lies in $H^{2\inv(\sigma)}(\Fl(k_1, \dots, k_d))$.

Let $\pi, \rho, \sigma$ be words with the above content.  The 
\defn{Schubert intersection number}
\[
  c_{\pi\rho\sigma} = 
  \int_{\Fl(k_1, \dots, k_d)} [X_\pi] [X_\rho] [X_\sigma]
\]
counts the number of points in a triple intersection
$X_\pi(F_\bullet) \cap X_\rho(G_\bullet) \cap X_\sigma(H_\bullet)$,
when this intersection is finite and transverse.
These numbers are also the structure constants of the cup product for
the cohomology ring
$H^*(\Fl(k_1,\dots, k_d)$.  Write $c_{\pi\rho}^\sigma := 
c_{\pi\rho\sigma^\vee}$, where $\sigma^\vee$ is $\sigma$ reversed.
The correspondence $[X_\sigma] \mapsto [X_{\sigma^\vee}]$ 
takes the Schubert basis to its dual under the Poincar\'e pairing, and so
\[
  [X_\pi] [X_\rho] = \sum_\sigma c_{\pi\rho}^\sigma [X_\sigma]
\]
in $H^*(\Fl(k_1, \dots, k_d))$.

\newcommand\BKc{\widetilde{c}}

We are interested in a different product structure on
$H^*(\Fl(k_1, \dots, k_d))$,
the \defn{Belkale-Kumar product} \cite{BK},
\[
  [X_\pi] \bkdot [X_\rho] 
  = \sum_\sigma \BKc_{\pi\rho}^\sigma [X_\sigma]
\]
whose structure constants can be defined as follows 
(see proposition~\ref{prop:invcondition}):
\begin{equation}
\label{eqn:bkinv}
  \BKc_{\pi\rho}^\sigma = 
  \begin{cases}
  c_{\pi\rho}^\sigma 
    &\quad\text{if $\inv_{ij}(\pi) + \inv_{ij}(\rho) = \inv_{ij}(\sigma)$
      for $1 \leq j < i \leq d$} \\
  0 & \quad\text{otherwise.}
  \end{cases}
\end{equation}
If our flag variety is a Grassmannian, this coincides with the
cup product; otherwise, it can be seen as a degenerate version.  
The Belkale-Kumar product has proven to be the more relevant product for 
describing the Littlewood-Richardson cone (recalled in \S \ref{sec:extremal}).

Our principal results are a combinatorial formula for the
Belkale-Kumar structure constants, and using this formula, a way to
factor each structure constant as a product of $d\choose 2$
Littlewood-Richardson coefficients.\footnote{%
  Since finishing this paper, we learned that N. Ressayre had been
  circulating a conjecture that some such factorization formula should exist.}
There are multiple known factorizations (such as in \cite{Richmond}) 
into $d-1$ factors, of which this provides a common refinement.

The factorization theorem is quicker to state.
For $S \subset \{1,\dots,d\}$, define the \defn{$S$-deflation} 
$D_S(\sigma)$ of $\sigma$ to be the
word on the totally ordered alphabet $S$ obtained by deleting letters
{\em not in} $\sigma$.
In particular $D_{ij}(\sigma)$ has only the letters $i$ and $j$.

\vskip.1in

\begin{thmn}[\ref{thm:fineproduct}](in \S \ref{sec:BKpuzzles})
  Let $\pi,\rho,\sigma$ be words with the same content.
  Then
  \[
  \BKc_{\pi\rho}^\sigma = 
  \prod_{i>j} c_{D_{ij} \pi, D_{ij} \rho}^{D_{ij} \sigma}\,.
  \]
\end{thmn}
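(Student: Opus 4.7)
The plan is to prove the factorization via the combinatorial BK-puzzle formula for $\BKc_{\pi\rho}^\sigma$ (established earlier in Section~\ref{sec:BKpuzzles}) together with the Knutson--Tao puzzle formula for Grassmannian Littlewood-Richardson coefficients. Under these formulae, the left-hand side counts BK-puzzles with triangular boundary $(\pi, \rho, \sigma^\vee)$, while each factor $c^{D_{ij}\sigma}_{D_{ij}\pi,\,D_{ij}\rho}$ on the right-hand side counts two-color KT puzzles on the alphabet $\{i,j\}$ with boundary $(D_{ij}\pi,\,D_{ij}\rho,\,(D_{ij}\sigma)^\vee)$. So the theorem reduces to a bijective identity between BK-puzzles and tuples of two-color puzzles.

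The central construction is an \emph{edge deflation} $D_{ij}$ on BK-puzzles: given a BK-puzzle $P$, we produce a two-color puzzle $D_{ij}(P)$ on the alphabet $\{i,j\}$ by keeping track, for every edge and every piece of $P$, only of the data associated to the color pair $\{i,j\}$. The reason to expect this to work cleanly is precisely the defining BK constraint \eqref{eqn:bkinv}: the strict inversion-matching imposed on each color pair forces the BK-puzzle pieces to decompose canonically as overlays of contributions, one per color pair appearing on their edges, and $D_{ij}$ simply extracts the $\{i,j\}$-layer. The first verifications are that $D_{ij}(P)$ is always a legitimate two-color KT puzzle and that its boundary word is the deflation of the boundary of $P$.

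The crux of the proof is then that the product map
\[
P \;\longmapsto\; \bigl(D_{ij}(P)\bigr)_{i>j}
\]
is a bijection between BK-puzzles with boundary $(\pi,\rho,\sigma^\vee)$ and the Cartesian product, over pairs $i>j$, of KT two-color puzzles with the corresponding deflated boundaries. Injectivity should follow from the fact that every edge label and every piece in $P$ is determined by knowing, for each pair of colors passing through that location, which two-color piece appears in the corresponding layer; the information in the ${d \choose 2}$ deflations is thus sufficient to reconstruct $P$.

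The main obstacle will be \emph{surjectivity}: given an arbitrary ${d \choose 2}$-tuple of two-color KT puzzles with the correct deflated boundaries, we must reassemble them into a single BK-puzzle. This is where the theorem has real content, since it asserts that the color pairs behave entirely independently. I expect surjectivity to be established by an explicit weaving procedure --- constructing $P$ region by region or edge by edge --- at each step using the BK-piece classification to check that the local overlay of two-color pieces is realized by a genuine BK-puzzle piece, with no global obstruction. Verifying the absence of obstructions is the combinatorial heart of the argument, and it is precisely the BK inversion condition on the individual pieces that rules out any conflict between different color pairs.
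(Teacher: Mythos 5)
Your top-level strategy is exactly the paper's: express $\BKc_{\pi\rho}^\sigma$ as the cardinality $|\Delta_{\pi\rho}^\sigma|$ (theorem \ref{thm:BKcountspuzzles}), express each factor on the right as a count of two-label Grassmannian puzzles, and exhibit $P\mapsto (D_{ij}P)_{i>j}$ as a bijection (theorem \ref{thm:assembly}). The gap is that this bijection \emph{is} the entire content of the theorem, and your proposal does not actually prove it. Both of your bijectivity arguments rest on being able to ``look up, at a given location of $P$, which two-color piece appears in each layer,'' but deflation destroys locations: $D_{ij}P$ lives in a triangle of side $n_i+n_j$, not $n$, and there is no a priori correspondence between positions in the $d\choose 2$ deflated triangles. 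So injectivity is not immediate (it comes only from constructing an explicit inverse), and surjectivity cannot be settled by a local, region-by-region weaving --- given an arbitrary tuple $(G_{ij})$ one must first decide \emph{where} in the big triangle the pieces of each $G_{ij}$ are supposed to sit, which is a global problem, not a local compatibility check on pieces.

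The paper's resolution is an induction on the number of labels $d$, with the inductive step supplied by lemma \ref{lem:AD}: the map $A_\oneornot\times D_\onecompl$ is a bijection out of the auxiliary set $(\Delta^1)_{\pi\rho}^\sigma$ of BK$^1$-puzzles, which coincides with $\Delta_{\pi\rho}^\sigma$ exactly when the statistic $c$ of lemma \ref{lem:invcount} vanishes (as your inversion hypothesis guarantees). The key device for reassembly is that a deflated puzzle remembers where its collapsed $1$-edges were in the form of a bounded honeycomb (\cite[\S 5, theorem 1]{KTW}); the $d-1$ puzzles $G_{i1}$ yield honeycombs $h_i$ in a common triangle whose overlay $\oplus_{i>1}h_i$ is reinflated to a single two-label puzzle $G$, and then the pair $(G,P)$ --- with $P$ obtained from $(G_{ij})_{i>j>1}$ by induction --- determines $Q$ via lemma \ref{lem:AD}(1),(2). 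To complete your proposal you need some substitute for this honeycomb bookkeeping; ``no global obstruction'' is precisely the assertion that requires proof. (Also dispatch separately the degenerate case: if $\inv_{ij}(\pi)+\inv_{ij}(\rho)\neq\inv_{ij}(\sigma)$ for some $i>j$, both sides vanish --- the left by proposition \ref{prop:invcondition}, the right because the corresponding two-letter coefficient vanishes for degree reasons.)
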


The opposite extreme from Grassmannians is the case of a full flag manifold.
Then the theorem says that $[X_\pi] \bkdot [X_\rho]$ is nonzero
only if $\pi$ and $\rho$'s inversion sets are disjoint, and their
union is an inversion set of another permutation $\sigma$. 
In that case, $[X_\pi] \bkdot [X_\rho] = [X_\sigma]$,
in agreement with \cite[corollary 44]{BK} and
\cite[corollary 4]{Richmond}.

We prove this theorem by analyzing a combinatorial model for Belkale-Kumar
coefficients, which we call \emph{BK-puzzles}.\footnote{%
  In 1999, the first author privately circulated a puzzle conjecture for full
  Schubert calculus, not just the BK product, involving more puzzle
  pieces, but soon discovered a counterexample.
  The $2$-step flag manifold subcase of that conjecture seems likely
  to be true; Anders Buch has checked it up to $n=16$ (see \cite{BKT}).}
Define the two \defn{puzzle pieces} to be
\begin{enumerate}
\item A unit triangle, each edge labeled with the same letter from
  our alphabet.
\item A unit rhombus (two triangles glued together) with edges labeled
  $i,j,i,j$ where $i>j$, as in figure \ref{fig:pieces}. 
\end{enumerate}
They may be rotated in $60^\circ$ increments, but not reflected
because of the $i>j$ requirement.

\begin{figure}[ht]
  \centering
  \epsfig{file=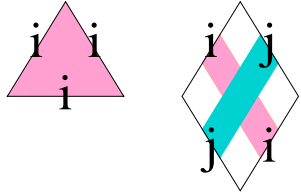,height=1in}
  \caption{The two puzzle pieces. On the rhombus, $i>j$.}
  \label{fig:pieces}
\end{figure}

A \defn{BK-puzzle} is a triangle of side-length $n$ 
filled with puzzle pieces, such that adjoining puzzle pieces have matching
edge labels. An example is in figure \ref{fig:puzex1}. We will occasionally
have need of \defn{puzzle duality}: if one reflects a BK-puzzle left-right
\emph{and} reverses the order on the labels, the result is again a BK-puzzle.

\begin{figure}[ht]
  \centering
  \epsfig{file=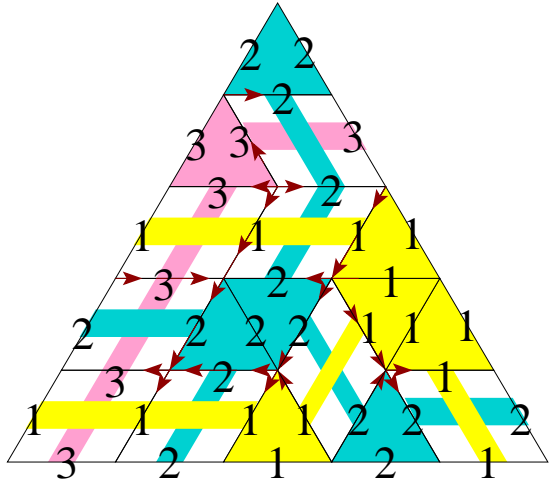,height=2.5in}
  \caption{A BK-puzzle whose existence shows
    that $\BKc_{12132,23112}^{32121}\geq 1$.
    (In fact it is $1$). 
    The edge orientations are explained in \S \ref{sec:extremal}.}
  \label{fig:puzex1}
\end{figure}

\begin{thmn}[\ref{thm:BKcountspuzzles}](in \S \ref{sec:BKpuzzles})
  The Belkale-Kumar coefficient $\BKc_{\pi\rho}^\sigma$ is
  the number of BK-puzzles with $\pi$ on the NW side, $\rho$ on the NE side,
  $\sigma$ on the S side, all read left to right.
\end{thmn}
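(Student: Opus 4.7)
The plan is to deduce the puzzle formula by combining Theorem~\ref{thm:fineproduct} with the classical Knutson-Tao puzzle rule for Grassmannians and a purely combinatorial bijection. Since each of $D_{ij}\pi$, $D_{ij}\rho$, $D_{ij}\sigma$ uses only the two letters $\{i,j\}$, the Schubert intersection number $c_{D_{ij}\pi,D_{ij}\rho}^{D_{ij}\sigma}$ is an ordinary Littlewood-Richardson coefficient on a Grassmannian, and is counted by KT-puzzles built from the two pieces available on $\{i,j\}$ (these being precisely the BK-puzzle pieces restricted to a two-letter alphabet). Thus Theorem~\ref{thm:fineproduct} reduces the task to exhibiting a bijection
\[
\{\text{BK-puzzles with boundary }(\pi,\rho,\sigma)\} \;\longleftrightarrow\; \prod_{i>j}\{\text{KT-puzzles with boundary }(D_{ij}\pi,D_{ij}\rho,D_{ij}\sigma)\}.
\]

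The forward direction is a natural ``$ij$-deflation'' operation $P \mapsto D_{ij}P$. Given a BK-puzzle $P$, keep the triangles labeled $i$ or $j$ together with the $ij$-rhombi, and contract every edge of any other label to a point. Reading off the boundary produces exactly $(D_{ij}\pi, D_{ij}\rho, D_{ij}\sigma)$, and a short local analysis at each interior vertex of $P$ confirms that the surviving pieces reassemble into a valid KT-puzzle on $\{i,j\}$.

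Constructing the inverse is the main obstacle. Given a family $(Q_{ij})_{i>j}$ of KT-puzzles with matching boundaries, one must build a unique BK-puzzle $P$ with $D_{ij}P = Q_{ij}$ for every pair. I would proceed by induction on the alphabet size $d$, using a generalized deflation $D_S$ for any subset $S \subseteq \{1,\ldots,d\}$ defined analogously. In the inductive step, the sub-family $\{Q_{ij}\}_{d>i>j}$ assembles (by the inductive hypothesis) into a BK-puzzle $P'$ on $\{1,\ldots,d-1\}$, while the remaining puzzles $\{Q_{di}\}_{i<d}$ encode where the $d$-labeled triangles and $(d,i)$-rhombi must be grafted in. The hard part is verifying that these grafting instructions are mutually consistent: each $Q_{di}$ independently prescribes a pattern for inserting the letter $d$ relative to the letter $i$, and one must show these patterns agree where they overlap. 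I expect this to reduce to a local lemma at each interior vertex of $P'$, forced by the orientation constraint $i>j$ on rhombi.

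Once inflation is defined, verifying that deflation and inflation are mutually inverse is tautological because both operations are entirely local. Combining this bijection with Theorem~\ref{thm:fineproduct} and the two-color Knutson-Tao rule then yields
\[
\BKc_{\pi\rho}^\sigma \;=\; \prod_{i>j} c_{D_{ij}\pi,D_{ij}\rho}^{D_{ij}\sigma} \;=\; \prod_{i>j}\#\{\text{KT-puzzles}\} \;=\; \#\{\text{BK-puzzles}\},
\]
as desired.
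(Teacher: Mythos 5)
Your argument is circular within the paper's logical structure: Theorem \ref{thm:fineproduct}, which you take as your starting point, is proved in the paper only as an immediate consequence of Theorem \ref{thm:BKcountspuzzles} together with the bijection of Theorem \ref{thm:assembly}. There is no independent source for the ${d\choose 2}$-fold factorization --- it is one of the paper's main new results (the abstract notes it ``refines previously known factorizations into $d-1$ factors''). What \emph{is} available independently is the coarse factorization of Lemma \ref{lem:coarseproduct}, which splits $\BKc_{\pi\rho}^\sigma$ into the contributions of the base and fibre of a projection of flag manifolds. The paper's proof of Theorem \ref{thm:BKcountspuzzles} uses exactly this: it inducts on $d$, pairing the geometric factorization $\BKc_{\pi\rho}^\sigma = \BKc_{A_\oneornot\pi, A_\oneornot\rho}^{A_\oneornot\sigma}\cdot \BKc_{D_\onecompl\pi, D_\onecompl\rho}^{D_\onecompl\sigma}$ from Lemma \ref{lem:coarseproduct} with the matching combinatorial bijection $\Delta_{\pi\rho}^\sigma \cong \Delta_{A_\oneornot\pi, A_\oneornot\rho}^{A_\oneornot\sigma}\times \Delta_{D_\onecompl\pi, D_\onecompl\rho}^{D_\onecompl\sigma}$ of Lemma \ref{lem:AD}, with the $d=2$ Grassmannian case (the Knutson--Tao rule) as the base. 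To repair your proof you would need to replace the appeal to Theorem \ref{thm:fineproduct} by an appeal to the coarse factorization and restructure the induction accordingly --- at which point you have the paper's argument.

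Separately, the step you yourself flag as ``the main obstacle'' --- inverting deflation and checking that the grafting instructions coming from the several two-letter puzzles are mutually consistent --- is the genuinely hard combinatorial content, and it is not settled by a local lemma at interior vertices: where the deflated label gets reinserted is a global condition, not a vertex-by-vertex one. The paper resolves it by recording the positions of the deflated label in each deflated puzzle as a \emph{bounded honeycomb}, overlaying these honeycombs, and reinflating; that the overlay reassembles into a single valid BK-puzzle rests on the honeycomb--puzzle correspondence of \cite[\S 5, theorem 1]{KTW} and the intermediate device of BK$^1$-puzzles in Lemma \ref{lem:AD}. So the claim that verifying the bijection is ``tautological because both operations are entirely local'' does not hold; without this machinery (or a substitute for it) the proposed bijection, and hence the proof, is incomplete.
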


Puzzles were introduced in \cite{KT,KTW}, where the labels were only allowed 
to be $0,1$. In this paper BK-puzzles with only two numbers will be called 
\defn{Grassmannian puzzles}. As we shall see, most of the structural
properties of Grassmannian puzzles hold for these more general BK-puzzles.
Theorem \ref{thm:assembly} corresponds a BK-puzzle to a list of
$d \choose 2$ Grassmannian puzzles, allowing us to prove theorem
\ref{thm:fineproduct} from theorem \ref{thm:BKcountspuzzles}.

Call a BK-puzzle \defn{rigid} if it is uniquely determined by its boundary,
i.e. if the corresponding structure constant is $1$.
Theorem D of \cite{RessayreGIT1}, plus the theorem above, then says
that regular faces of the Littlewood-Richardson cone (defined in
\S \ref{sec:extremal}) correspond to rigid BK-puzzles. We indicate
an independent proof of this result, and in \S \ref{sec:Fultonfaces}
determine which regular faces hold the Littlewood-Richardson coefficients
equaling $1$.

\subsection*{Acknowledgments}

We thank Shrawan Kumar for correspondence on the BK product, and
Nicolas Ressayre and Mike Roth for suggesting some references.
The honeycomb-related work was developed a number of years ago with
Terry Tao, without whom this half of the paper would have been impossible.

\section{The Belkale-Kumar product 
  on $H^*(\Fl(k_1,\ldots,k_d))$}

For the moment let $G$ be a general complex connected reductive Lie group,
and $P$ a parabolic with Levi factor $L$ and unipotent radical $N$.
Very shortly we will specialize to the $G=GL_n$ case.

\begin{Proposition}
  \label{prop:Pmovable}
  The Schubert intersection number
  $c_{\pi\rho\sigma}$ is non-zero if and only if there exist
  $a_1, a_2, a_3 \in P$
  such that
  \begin{equation}
    \label{eqn:movable}
    \lie{n} = (a_1 \lie{n}_{\pi} a_1^{-1}) \oplus (a_2 \lie{n}_{\rho} a_2^{-1})
    \oplus (a_3 \lie{n}_{\sigma} a_3^{-1})\,.
  \end{equation}
\end{Proposition}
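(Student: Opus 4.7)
The plan is to translate $c_{\pi\rho\sigma}\neq 0$ into the geometric statement that a transverse triple intersection of Schubert varieties exists at some point of $G/P$, and then to read off transversality at a fixed base point as a direct-sum decomposition of tangent spaces via $P$-translates of standard subspaces. Fix the base point $x_0 = eP \in G/P$ and identify $T_{x_0}(G/P)\cong\lie{n}$. For the standard Schubert variety $X^{std}_\pi$ whose open cell contains $x_0$, the tangent space at $x_0$ is a distinguished sum of root spaces, which I denote $\lie{n}_\pi\subseteq\lie{n}$. Since $P = \mathrm{Stab}_G(x_0)$, every Schubert variety of type $\pi$ whose open cell passes through $x_0$ is of the form $a\cdot X^{std}_\pi$ for some $a\in P$, with tangent space $a\lie{n}_\pi a^{-1}$ at $x_0$.

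For the forward direction, suppose $c_{\pi\rho\sigma}\neq 0$. By Kleiman's transversality theorem there exist flags for which the triple Schubert intersection is finite, transverse, and non-empty, with each intersection point lying in the open Schubert cell of each variety. Using the transitive $G$-action, move such a point to $x_0$; then each of the three flags must be a $P$-translate of the appropriate standard flag, say by $a_i \in P$. Transversality at $x_0$ asserts that $a_1\lie{n}_\pi a_1^{-1}$, $a_2\lie{n}_\rho a_2^{-1}$, and $a_3\lie{n}_\sigma a_3^{-1}$ together span $\lie{n}$; since $c_{\pi\rho\sigma}\neq 0$ forces $\inv(\pi)+\inv(\rho)+\inv(\sigma) = \dim(G/P)$, this spanning is in fact a direct sum.

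For the converse, given $a_i\in P$ realizing the decomposition, applying these to the standard flags produces Schubert varieties meeting transversely at $x_0$, so $x_0$ is an isolated transverse intersection point with local intersection multiplicity $1$. To upgrade this to $c_{\pi\rho\sigma}\geq 1$, one perturbs the three flags generically: transversality at $x_0$ is an open condition, so a transverse intersection point persists near $x_0$, and by Kleiman the perturbed intersection becomes proper everywhere else. The main obstacle is precisely this last step---turning a single transverse local intersection for a specific (possibly non-generic) choice of flags into a positive cohomological contribution---which I would handle by combining openness of transversality in the universal family of flag triples with the positivity of Schubert intersection numbers in $H^*(G/P)$.
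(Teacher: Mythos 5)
There is a genuine error at the core of your argument: you identify $\lie{n}_\pi$ with the \emph{tangent} space at $x_0$ to a Schubert variety through $x_0$, whereas the paper (which only sketches the proof, interpreting the situation and then citing [BK] and [PuSo08]) identifies it with the \emph{conormal} space. The distinction is forced by the definitions: $\lie{n}_\pi$ is spanned by $\{e_{pq} : (p,q)\in\Inv(\pi)\}$, so $\dim\lie{n}_\pi=\inv(\pi)=\codim X_\pi$, while the tangent space to $X_\pi$ at a smooth point has dimension $\dim(G/P)-\inv(\pi)$. Under your tangent-space reading the two key assertions of your forward direction collapse. First, for a \emph{triple} intersection, transversality at $x_0$ is not the statement that the three tangent spaces span $T_{x_0}(G/P)$ --- that condition is far too weak (three generic planes in $3$-space span it but meet in a line); the correct condition is that the map $T_{x_0}(G/P)\to\bigoplus_i T_{x_0}(G/P)/T_{x_0}X_i$ is surjective, i.e.\ that the three conormal spaces are linearly independent in $T^*_{x_0}(G/P)$. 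Second, your claim that the spanning ``is in fact a direct sum'' is dimensionally impossible for tangent spaces: their dimensions sum to $2\dim(G/P)$. The argument is repaired by replacing tangent spaces with conormal spaces throughout (identifying $\lie{n}\iso T^*_{x_0}(G/P)$ via the trace pairing, so that the conormal space to $a\cdot X_\pi(F^{\mathrm{std}}_\bullet)$ at $x_0$ is $a\lie{n}_\pi a^{-1}$): independence of the conormals plus $\inv(\pi)+\inv(\rho)+\inv(\sigma)=\dim(G/P)$ is then literally \eqref{eqn:movable}. Your identification of the admissible flags with $P$-translates of standard ones is correct and is the right mechanism.

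Separately, you correctly flag that the converse's last step --- promoting one transverse zero-dimensional intersection point for special flags to $c_{\pi\rho\sigma}\neq 0$ for generic flags --- is the delicate point, but ``openness of transversality plus positivity'' is not yet a proof: openness near $x_0$ in the space of flag triples does not by itself rule out that the \emph{generic} triple intersection is empty, nor does it control excess components elsewhere. The standard way to close this (as in Belkale--Kumar) is to consider the irreducible universal intersection variety $\{(x,g_1,g_2,g_3): x\in\bigcap_i g_iX_i\}$ fibered over $(G/B)^3$: a single zero-dimensional transverse fiber point forces the projection to be dominant and generically finite, and Kleiman then makes the generic fiber a transverse count of $c_{\pi\rho\sigma}$ points. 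As written, your proof has a wrong linear-algebra model in one direction and an incomplete argument in the other.
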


The definition of $\lie n_\sigma$ for $G=GL_n$ will be given shortly.
Briefly, proposition~\ref{prop:Pmovable} is proven by interpreting 
$\lie{n}_\sigma$ as the conormal space at a smooth point $(V_1 < \dots < V_d)$ 
to some Schubert variety $X_\sigma(F_\bullet)$.  
The condition~\eqref{eqn:movable} measures whether it is possible to
make $(V_1 < \dots < V_d)$ a transverse point of intersection of three
such Schubert varieties.  See \cite{BK} or \cite{PurSot} for details.

Belkale and Kumar define the triple $(\pi, \rho, \sigma)$ to be
\defn{Levi-movable}
if there exist $a_1, a_2, a_3 \in L$ such that~\eqref{eqn:movable} holds.
Using this definition, they consider the numbers
\[
  \BKc_{\pi\rho\sigma} 
  := \begin{cases}
  c_{\pi\rho\sigma} &\quad\text{if $(\pi, \rho, \sigma)$ is Levi-movable} \\
  0 & \quad\text{otherwise}\,,
  \end{cases}
\]
and show that the numbers
$\BKc_{\pi\rho}^\sigma = \BKc_{\pi\rho\sigma^\vee}$
are the structure constants of a commutative, associative product
on $H^*(G/P)$. 

Our first task is to show that, in our special case $G=GL_n$, 
this definition of $\BKc_{\pi\rho}^\sigma$ is equivalent to the definition
\eqref{eqn:bkinv} given in the introduction.
In this context $P \subset \GL_n$ is the stabilizer of a coordinate flag 
$(V_1 < \dots < V_d) \in \Fl(k_1, \dots, k_d)$,
and $N \subset \GL_n$ is the unipotent Lie group with Lie algebra
\[
\lie{n} =
   \{A  \in \Mat_n : 
   A_{pq} = 0 \text{ if $p > k_{j-1}$, $q \leq k_j$  for some $j$}\} \,.
\]
We will denote the set (not the number) of all inversions of a word $\sigma$ 
(resp. $ij$-inversions) by $\Inv(\sigma)$ (resp. $\Inv_{ij}(\sigma)$).
Define
$\lie{n}_\sigma \subset \lie{n}$ to be the subspace spanned by 
$\{e_{pq} : (p,q) \in \Inv(\sigma)\}$; here
$e_{pq} \in \Mat_n$ denotes the matrix with a $1$ in row $p$, 
column $q$, and $0$s elsewhere.

\begin{Proposition}
  \label{prop:invcondition}
  The triple $(\pi, \rho, \sigma^\vee)$ is Levi-movable if and only if
  $c_{\pi\rho}^\sigma \neq 0$ and
  for all $1 \leq j < i \leq d$, we have 
  \[
  \inv_{ij}(\pi) + \inv_{ij}(\rho) = \inv_{lj}(\sigma)\,.
  \]
\end{Proposition}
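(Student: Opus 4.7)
The plan is to analyze both conditions through the $L$-isotypic decomposition of $\lie{n}$, and to establish the nontrivial direction by a one-parameter subgroup degeneration that retracts a $P$-movable decomposition to a Levi-movable one. The combinatorial content---identifying each block dimension with $\inv_{ij}(\sigma)$---is essentially bookkeeping; the real work lies in controlling the limit.

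The structural setup runs as follows: $L = \prod_{i=1}^d \GL_{n_i}$ (with $n_i := k_i - k_{i-1}$) acts by conjugation on $\lie{n}$ preserving the off-diagonal block decomposition $\lie{n} = \bigoplus_{1 \leq j < i \leq d} \lie{n}^{(j,i)}$, where $\lie{n}^{(j,i)} \cong \Mat_{n_j \times n_i}$ is an irreducible $L$-module, and every $\lie{n}_\sigma$ is block-homogeneous since its basis vectors $e_{pq}$ each lie in a single block; a direct combinatorial check identifies $\dim \lie{n}_\sigma^{(j,i)} = \inv_{ij}(\sigma)$. The forward direction is then immediate: Levi-movability implies $c_{\pi\rho}^\sigma \neq 0$ because $L \subset P$, and since $L$ preserves each $\lie{n}^{(j,i)}$ the decomposition~\eqref{eqn:movable} with $a_k \in L$ splits block by block, producing $\inv_{ij}(\pi) + \inv_{ij}(\rho) + \inv_{ij}(\sigma^\vee) = \dim \lie{n}^{(j,i)}$; combined with the elementary identity $\inv_{ij}(\sigma) + \inv_{ij}(\sigma^\vee) = \dim \lie{n}^{(j,i)}$ (a direct check comparing $\sigma$ with its reversal block by block), this rearranges to the required inversion equation.

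For the reverse direction, fix $a_1, a_2, a_3 \in P$ realizing~\eqref{eqn:movable} (using $c_{\pi\rho}^\sigma \neq 0$ and Proposition~\ref{prop:Pmovable}), and introduce the central cocharacter $\lambda : \Gm \to Z(L)$ acting on $\lie{n}^{(j,i)}$ with positive weight $i - j$. Since each $\lie{n}_{\tau_k}$ (with $\tau_1 = \pi$, $\tau_2 = \rho$, $\tau_3 = \sigma^\vee$) is block-homogeneous and hence $\lambda(t)$-stable, and since $\lambda(t)$ centralizes $L$, one computes $\lambda(t)\, a_k\, \lambda(t)^{-1} \to l_k$ as $t \to 0$, where $l_k \in L$ is the Levi part of $a_k$. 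Applying conjugation by $\lambda(t)$ to~\eqref{eqn:movable} yields, for every $t \neq 0$, a direct-sum decomposition with summands $V_k(t) := (\lambda(t) a_k \lambda(t)^{-1}) \lie{n}_{\tau_k} (\lambda(t) a_k \lambda(t)^{-1})^{-1}$, which converge in the relevant product of Grassmannians to $l_k \lie{n}_{\tau_k} l_k^{-1}$ as $t \to 0$.

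The main obstacle is showing that this limit triple remains a direct-sum decomposition of $\lie{n}$, since a family of transverse triples can in general degenerate to a non-transverse limit. Each limit summand $l_k \lie{n}_{\tau_k} l_k^{-1}$ is block-homogeneous, so the limit sum splits blockwise as $\bigoplus_{j < i} \sum_k l_k \lie{n}_{\tau_k}^{(j,i)} l_k^{-1}$, and in each block $(j,i)$ the three pieces have total dimension $\dim \lie{n}^{(j,i)}$ by the inversion hypothesis. Promoting this dimension matching to a genuine direct sum is the crux of the argument: one analyzes the determinant of the sum map $\bigoplus_k V_k(t) \to \lie{n}$ as a polynomial in $t$, whose leading coefficient as $t \to 0$ factors blockwise and is cut out precisely by the block-dimension hypothesis. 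I expect this leading-term analysis---essentially the Belkale--Kumar degeneration argument---to be the technical heart of the proof.
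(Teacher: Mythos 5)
Your forward direction is complete and correct, and is in fact slightly more streamlined than the paper's: since each $a_k\in L$ preserves the block decomposition $\lie n=\bigoplus_{j<i}\lie n^{(j,i)}$ and each $\lie n_{\tau_k}$ is block-homogeneous, a Levi-movable decomposition restricts to a direct sum in every block, giving the dimension identities on the nose (the paper instead extracts only inequalities from the weight decomposition and then uses $c_{\pi\rho\sigma}\neq 0$ to force equality). Your reverse direction is also the right strategy, and it is essentially the paper's argument in multiplicative form: the paper filters $\lie n$ by the weights of the central $d$-torus of $L$, writes $a_k e_{pq}a_k^{-1}=c_k e_{pq}c_k^{-1}+(\text{higher weight})$, and expands the top wedge $a_1\Lambda_\pi a_1^{-1}\wedge a_2\Lambda_\rho a_2^{-1}\wedge a_3\Lambda_{\sigma^\vee}a_3^{-1}$; your cocharacter $\lambda$ implements exactly that filtration.

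However, the step you defer (``I expect this leading-term analysis\dots to be the technical heart'') is precisely the step that requires an idea, and that idea is absent. A family of direct-sum decompositions can perfectly well degenerate at $t=0$, and the blockwise dimension count only shows that the limit triple has the \emph{right dimensions} to be a complement, not that it is one; so ``the leading coefficient factors blockwise and is cut out by the hypothesis'' does not yield nonvanishing of $D(0)$. What closes the argument is the fact that $\bigwedge^{\dim\lie n}(\lie n)$ is one-dimensional, hence a \emph{single} weight space for the center of $L$, of weight $w_0=\sum_{i>j}n_in_j(y_j-y_i)$. Concretely: since each $\lie n_{\tau_k}$ is $\lambda$-stable, conjugating by $\lambda(t)$ rescales the entire wedge by the single factor $t^{\,\langle\lambda,\,w_0-\sum_k\wt\Lambda_{\tau_k}\rangle}$, so $D(t)=t^{m}D(1)$ exactly with $D(1)\neq 0$, and the inversion hypothesis says precisely that $m=0$; hence $D$ is a nonzero constant and $D(0)\neq 0$. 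Equivalently, in the paper's additive phrasing, every ``higher weight'' correction term in the wedge expansion would have to live in a weight space of $\bigwedge^{\dim\lie n}(\lie n)$ other than the unique one, so all such terms vanish identically and the Levi term equals the full (nonzero) wedge. Without this single-weight-space observation the limit argument is not a proof.
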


\begin{proof}
  By duality (replacing $\sigma$ by $\sigma^\vee$), we may rephrase this 
  as follows. Assume 
  $c_{\pi\rho\sigma} \neq 0$.  We must show that $(\pi, \rho, \sigma)$ is
  Levi-movable iff for all $i > j$,
  \begin{equation}
    \label{eqn:symminvcondition}
    \inv_{ij}(\pi) + \inv_{ij}(\rho) + \inv_{ij}(\sigma) 
    = (k_i-k_{i-1})(k_j-k_{j-1}) \,.
  \end{equation}

The center of $L \iso \prod_{i=1}^d GL(n_i)$ is a $d$-torus, 
and acts on $\lie n$ by conjugation.
This action defines
a weight function on the standard basis for $\lie n$, which may be 
written:
$\wt(e_{pq}) = y_j - y_i$ where $k_{i-1} < q \leq k_i$
and $k_{j-1} < p \leq k_j$.
In particular, we have $\wt(e_{pq}) = y_j - y_i$ if
$(p,q)$ is an $ij$-inversion of $\pi, \rho$ or $\sigma$.  
The action of the center of $L$, and hence
the weight function, extends to the exterior algebra 
$\bigwedge^*(\lie n)$.
The weights are partially ordered: 
$\sum_{i=1}^d \alpha_i y_i$ is \defn{higher} than $\sum_{i=1}^d \beta_i y_i$
if their difference is in the cone spanned by 
$\{y_i - y_{i+1}\}_{i=1,\dots,d-1}$. 

Let $\Lambda_\pi = \bigwedge_{(p,q) \in \Inv(\pi)} e_{pq}$ and
$\Lambda_{\pi}^{ij} = \bigwedge_{(p,q) \in \Inv_{ij}(\pi)} e_{pq}$, with
$\Lambda_\rho$, $\Lambda_{\rho}^{ij}$, etc. defined analogously.
By proposition~\ref{prop:Pmovable}, there exist $a_1, a_2, a_3 \in P$ such
that
\begin{equation}
\label{eqn:Pmovablewedge}
  a_1 \Lambda_\pi a_1^{-1} \wedge 
  a_2 \Lambda_\rho a_2^{-1} \wedge 
  a_3 \Lambda_\sigma a_3^{-1}
  \neq 0
\end{equation}
Write $a_m = b_m c_m$ where $b_m \in N$, $c_m \in L$, $m = 1,2,3$.
Note that $c_m e_{pq} c_m^{-1}$ is a sum of terms of the same weight
as $e_{pq}$, and that 
\[
  a_m e_{pq} a_m^{-1} = c_m e_{pq} c_m^{-1} +
  \text{terms of higher weight.}
\]
Hence the left hand side of \eqref{eqn:Pmovablewedge} can be written as
\begin{equation}
\label{eqn:PLmovablewedge}
  c_1 \Lambda_\pi c_1^{-1} \wedge 
  c_2 \Lambda_\rho c_2^{-1} \wedge 
  c_3 \Lambda_\sigma c_3^{-1}
  + \text{terms of higher weight.}
\end{equation}
Now $\bigwedge^{\dim(\lie n)}(\lie n)$ has only one weight, which is
$\sum_{i>j} (k_i-k_{i-1})(k_j-k_{j-1}) (y_j - y_i)$.
If \eqref{eqn:symminvcondition} holds, then the first term of
\eqref{eqn:PLmovablewedge} has this weight, and the terms of higher
weight are zero; thus
\[
  c_1 \Lambda_\pi c_1^{-1} \wedge 
  c_2 \Lambda_\rho c_2^{-1} \wedge 
  c_3 \Lambda_\sigma c_3^{-1} \\
  =
  a_1 \Lambda_\pi a_1^{-1} \wedge 
  a_2 \Lambda_\rho a_2^{-1} \wedge 
  a_3 \Lambda_\sigma a_3^{-1}
  \neq 0\,,
\]
which shows that $(\pi, \rho, \sigma)$ is Levi-movable.  

Conversely,
if $(\pi, \rho, \sigma)$ is Levi-movable, then there exist 
$a_1, a_2, a_3 \in L$ such that~\eqref{eqn:Pmovablewedge} holds.
Since $\Lambda_{\pi} = \bigwedge_{i>j} \Lambda_{\pi}^{ij}$, we have
\[
  a_1 \Lambda_{\pi}^{ij} a_1^{-1} \wedge 
  a_2 \Lambda_{\rho}^{ij} a_2^{-1} \wedge 
  a_3 \Lambda_{\sigma}^{ij} a_3^{-1}
  \neq 0
\]
for all $i>j$.
Since the action of $L$ on $\lie n$ preserves the weight spaces, 
this calculation is happening inside 
$\bigwedge^*\big(\text{$y_j -y_i$ weight space of $\lie n$}\big)$.
This weight space has dimension $(k_i-k_{i-1})(k_j-k_{j-1})$, so
\[
  \inv_{ij}(\pi) + \inv_{ij}(\rho) + \inv_{ij}(\sigma) 
  \leq (k_i-k_{i-1})(k_j-k_{j-1})\,.
\]
If any of these inequalities were strict, then summing them would yield
$\inv(\pi) + \inv(\rho) + \inv(\sigma) < \dim(\Fl(k_1, \dots, k_n))$. 
But this contradicts
$c_{\pi\rho\sigma} \neq 0$, and hence we deduce~\eqref{eqn:symminvcondition}.
\end{proof}

\begin{Remark}
Belkale and Kumar also give a numerical criterion for Levi-movability
\cite[theorem 15]{BK}.  
When expressed in our notation,
their condition asserts that $(\pi, \rho, \sigma^\vee)$ is Levi-movable
iff $c_{\pi\rho}^\sigma \neq 0$ and for all $1 \leq l < d$, 
\[
  \sum_{j\leq l < i} \big(
   \inv_{ij}(\pi) + \inv_{ij}(\rho) - \inv_{ij}(\sigma)
  \big) = 0\,.
\]
It is is an interesting exercise to show combinatorially that that
this is equivalent to the condition in proposition~\ref{prop:invcondition}.
\end{Remark}

Recall from the introduction the deflation operations $D_S$ on words.
Next, consider an equivalence relation $\sim$ on $\{1, \dots, d\}$ 
such that $i\sim j, i>l>j \implies i\sim l\sim j$, and 
define $A_\sim(\sigma) := \sigma/\!\sim$ (where $A$ introduces $A$mbiguity). 

Given such an equivalence relation, 
let $S_1 < S_2 < \dots < S_{d'}$ be the (totally ordered) equivalence
classes of of $\sim$, and let $k'_i = k_{\max(S_i)}$.
There is a natural projection
\[
  \alpha_\sim : \Fl(k_1, \dots, k_d) \to \Fl(k'_1 \dots, k'_{d'})\,.
\]
whose fibres are isomorphic to products of partial flag varieties:
\[
  \Fl(k_j : j \in S_1) \times \Fl(k_j-k'_1 : j \in S_2) \times \dots \times 
  \Fl(k_j - k'_{d'-1} : j \in S_{d'})\,.
\]
The image of a Schubert variety $X_\sigma(F_\bullet)$ is the
Schubert variety $X_{A_\sim(\sigma)}(F_\bullet)$.  The fibre over
a smooth point $(V'_1 < \dots < V'_{d'})$ is a product of Schubert varieties
$X_{D_{S_1}(\sigma)} \times \dots \times X_{D_{S_{d'}}(\sigma)}$.
We will denote this fibre by $X_{D_\sim(\sigma)}(F_\bullet, V')$.

It is easy to verify that
\begin{align}
  \label{eqn:codimbase}
  \codim X_{D_\sim(\sigma)}(F_\bullet, V')
  &= \sum_{i \sim j} \inv_{ij}(\sigma) \\
  \label{eqn:codimfibre}
  \codim X_{A_\sim(\sigma)}(F_\bullet) 
  &= \sum_{i \nsim j} \inv_{ij}(\sigma) \,.
\end{align}

The main result we'll need in subsequent sections is the next lemma.
We sketch a proof here; a more detailed proof can be 
found in~\cite[Theorem 3]{Richmond}.

\begin{Lemma} 
\label{lem:coarseproduct}
Assume $(\pi, \rho, \sigma^\vee)$ is Levi-movable. Then
\[
  \BKc_{\pi\rho}^\sigma 
  = \BKc_{A_\sim(\pi)A_\sim(\rho)}^{A_\sim(\sigma)} \cdot
 \prod_{m=1}^{d'}
 \BKc_{D_{S_m}(\pi)D_{S_m}(\rho)}^{D_{S_m}(\sigma)} \,.
\]
\end{Lemma}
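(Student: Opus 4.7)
The plan is to exploit the fiber bundle structure $\alpha_\sim : \Fl(k_1,\ldots,k_d) \to \Fl(k'_1,\ldots,k'_{d'})$ and to show that, under the Levi-movability hypothesis, the Schubert triple intersection number in the total space factors as an intersection number in the base times intersection numbers in each factor of the typical fiber. Since Levi-movability implies that each BK coefficient appearing in the statement equals the corresponding ordinary Schubert coefficient $c$, the job reduces to proving the analogous factorization for the numbers $c_{\pi\rho\sigma^\vee}$.

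First I would split the Levi-movability equality \eqref{eqn:symminvcondition} according to the equivalence relation $\sim$. It holds for every pair $i>j$, so grouping the terms with $i \not\sim j$ gives exactly the Levi-movability condition for $(A_\sim\pi, A_\sim\rho, A_\sim\sigma^\vee)$ on the base $\Fl(k'_1,\ldots,k'_{d'})$, while grouping the terms inside a single equivalence class $S_m$ gives the Levi-movability condition for $(D_{S_m}\pi, D_{S_m}\rho, D_{S_m}\sigma^\vee)$ on the corresponding factor of the fiber. In particular all BK coefficients on the right-hand side of the claimed formula agree with their ordinary Schubert counterparts.

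Next I would interpret $\BKc_{\pi\rho}^\sigma = c_{\pi\rho\sigma^\vee}$ as the number of points of a transverse triple intersection $X_\pi(F_\bullet) \cap X_\rho(G_\bullet) \cap X_{\sigma^\vee}(H_\bullet)$ for generic flags, and push down along $\alpha_\sim$. By \eqref{eqn:codimfibre} the three image Schubert varieties in the base have codimensions summing to $\dim \Fl(k'_1,\ldots,k'_{d'})$, so generically they meet transversely in $c_{A_\sim\pi, A_\sim\rho, A_\sim\sigma^\vee}$ points. Over each such point the fiber is a product $\prod_m \Fl(k_j - k'_{m-1} : j \in S_m)$, and by \eqref{eqn:codimbase} the three Schubert varieties restrict to Schubert varieties (products over $m$) whose codimensions fill the fiber. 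Generically the fiber intersection is transverse, zero-dimensional, and factors as a product over $m$; multiplying the base count by the product of the fiber factor counts yields the formula.

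The main obstacle is justifying the two genericity claims: that generic $\GL_n$-translates of Schubert varieties in the total space push down to generically translated Schubert varieties in the base, and that their restriction to a generic fiber is itself a transverse triple intersection realizing the coefficients $c_{D_{S_m}\pi, D_{S_m}\rho, D_{S_m}\sigma^\vee}$ factor by factor. The first follows from the $\GL_n$-equivariance of $\alpha_\sim$ together with Kleiman transversality on the base. The second requires transversality for the induced $L$-action on the fiber; here the weight-space argument already used in proposition \ref{prop:invcondition} is the key input, since it shows that the split Levi-movability conditions imply that the deflated triples really do achieve their transverse intersection inside each fiber factor, so the fiber count is the honest product $\prod_m c_{D_{S_m}\pi, D_{S_m}\rho, D_{S_m}\sigma^\vee}$ as required.
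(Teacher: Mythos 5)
Your proposal follows essentially the same route as the paper: factor the triple intersection along the fibration $\alpha_\sim$, use the codimension identities \eqref{eqn:codimbase}--\eqref{eqn:codimfibre} with proposition \ref{prop:invcondition} to get the expected dimensions, apply Kleiman transversality in the base and in the fibre (where the correct justification is that a Levi subgroup of the stabilizer of $V'$ acts transitively on $\alpha_\sim^{-1}(V')$, rather than the weight-space computation itself), and finally observe that the inversion conditions split over $\sim$ so every coefficient on the right is itself a Belkale--Kumar coefficient. This matches the paper's argument.
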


\begin{proof}
First observe that for generic complete flags 
$F_\bullet, G_\bullet, H_\bullet$, 
the intersections
\begin{gather}
  \label{eqn:intersecttotal}
  X_\pi(F_\bullet) \cap X_\rho(G_\bullet) \cap X_{\sigma^\vee}(H_\bullet) \\
  \label{eqn:intersectbase}
  X_{A_\sim(\pi)}(F_\bullet) \cap X_{A_\sim(\rho)}(G_\bullet) \cap 
  X_{A_\sim(\sigma^\vee)}(H_\bullet) \\
  \label{eqn:intersectfibre}
  X_{D_\sim(\pi)}(F_\bullet, V') \cap X_{D_\sim(\rho)}(G_\bullet, V') \cap 
  X_{D_\sim(\sigma^\vee)}(H_\bullet, V')
\end{gather}
are all finite and transverse.  (In~\eqref{eqn:intersectfibre}, $V'$ 
is any point of~\eqref{eqn:intersectbase}.)  
The fact that the expected dimension of each intersection is finite 
can be
seen using \eqref{eqn:codimbase}, \eqref{eqn:codimfibre} and 
proposition~\ref{prop:invcondition}.  
Transversality follows
from Kleiman's transversality theorem.  For~\eqref{eqn:intersecttotal}
and~\eqref{eqn:intersectbase}
this is a standard argument; for~\eqref{eqn:intersectfibre}, we use
the fact a Levi subgroup of the stabilizer of $V'$ acts transitively 
on the fibre $\alpha_\sim^{-1}(V')$.
%
%Conversely, if for every $\sim$ as above, all these intersections are finite
%and transverse, then $(\pi, \rho, \sigma)$ is Levi-movable.  This fact 
%follows more readily from Belkale and Kumar's numerical criterion 
%and does not generalize beyond the type A setting.
%
% K: This is mildly interesting, but my present feeling is that it's
%    not worth including.
%

This shows that the number of points in 
\eqref{eqn:intersecttotal} is the product of the numbers of points
in \eqref{eqn:intersectbase} and \eqref{eqn:intersectfibre}, i.e.
\[
  c_{\pi\rho}^{\sigma} 
  = c_{A_\sim(\pi)\,A_\sim(\rho)}^{A_\sim(\sigma)} \cdot
 \prod_{m=1}^{d'}
 c_{D_{S_m}(\pi)\,D_{S_m}(\rho)}^{D_{S_m}(\sigma)}\,.
\]
Again, using proposition~\ref{prop:invcondition}, we find that 
$(A_\sim(\pi), A_\sim(\rho), A_\sim(\sigma^\vee))$ and
$(D_{S_m}(\pi),D_{S_m}(\rho),D_{S_m}(\sigma^\vee))$, $m=1,\dots, d'$,
are Levi-movable;
hence we may add tildes everywhere.
\end{proof}

\section{BK-puzzles and their disassembly}\label{sec:BKpuzzles}

Say that two puzzle pieces in a BK-puzzle $P$ of exactly the same
type, and sharing an edge, are \defn{in the same region}, and let the
decomposition into \defn{regions} be the transitive closure thereof.
Each region is either made of $(i,i,i)$-triangles, and called an
\defn{$i$-region}, or $(i,j,i,j)$-rhombi, and called an
\defn{$(i,j)$-region}.

The basic operation we will need on BK-puzzles is ``deflation'' 
\cite[\S 5]{KTW}, extending the operation $D_S$ defined in the introduction on
words.

\begin{Proposition}\label{prop:deflation}
  Let $P$ be a BK-puzzle, and $S$ a set of edge labels.
  Then one can shrink all of $P$'s edges with labels {\em not in} $S$ to points,
  and obtain a new BK-puzzle $D_S P$ whose sides have been $S$-deflated.
\end{Proposition}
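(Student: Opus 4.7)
The plan is to induct on $|S^c|$, reducing to the case where a single label $k \notin S$ is being removed, so $S = \{1,\dots,d\}\setminus\{k\}$. In this case $D_S P$ should be obtained by contracting all $k$-labeled edges of $P$ to points, so I must verify that the result is a BK-puzzle whose boundary is $P$'s boundary with the $k$'s deleted.

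First I would give the local collapsing rule, one puzzle piece at a time: a triangle labeled $i \neq k$ is preserved; a $k$-triangle collapses to a single point; a rhombus with both labels in $S$ is preserved; and a rhombus with labels $\{k,l\}$, $l \in S$, collapses to a single $l$-edge, since shrinking its two parallel $k$-sides identifies its two parallel $l$-sides. Each resulting object is either an allowed BK-puzzle piece or a degenerate (lower-dimensional) object that disappears in the quotient. Each of the three boundary sides of $P$ loses exactly its $k$-edges, so its word is $S$-deflated, and the three resulting side-lengths agree by the content condition.

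The main obstacle is the global assembly: why does this local prescription give a bona fide smaller puzzle, rather than some inconsistent or non-planar figure? For this I would analyze the sub-$1$-complex $E_k \subseteq P$ of $k$-labeled edges and argue that each of its connected components is contractible and sits inside a topological disk of $P$. The key combinatorial input is that adjacency of two rhombi involving $k$ along a common $k$-edge forces them to be parallel (the requirement $i > j$ on a rhombus rules out the reflected gluing), so a maximal connected family of $(k,l)$-rhombi joined along $k$-edges forms a straight strip with a fixed orientation, possibly capped at one or both ends by $k$-triangles. Contracting each such component to a point (or to a single $l$-edge, in the case of a pure $(k,l)$-strip) is then a locally trivial operation on the ambient $2$-complex: the surviving and collapsed pieces tile a smaller triangle of side length $n - m$, where $m$ is the (common) number of $k$'s along any boundary side of $P$. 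Edge-label matching and the $S$-deflated boundary then follow automatically, and iterating this one-label deletion completes the induction.
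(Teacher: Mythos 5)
Your reduction to deleting one label at a time and your local collapsing rules (triangle to a point, $(k,l)$-rhombus to a single $l$-edge) agree with the paper's, and you have correctly isolated the crux: why the local prescription assembles globally. But your resolution of that crux has a genuine gap. The pieces carrying $k$-edges do not organize into "straight strips of $(k,l)$-rhombi possibly capped by $k$-triangles." A maximal region of $k$-triangles is in general two-dimensional (e.g.\ it can contain internal vertices surrounded by six $k$-triangles), every edge of such a region is a $k$-edge, and rhombi of several different types $(k,l)$, $(k,m)$, $(l,k)$ can attach along its boundary and to one another; your handedness argument addresses only two rhombi of the \emph{same} type sharing a $k$-edge, and says nothing about these mixed and branching configurations. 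Relatedly, your description of what a component collapses to is off: a chain of $m$ rhombi glued along $k$-edges collapses to a path of $m$ edges, not to "a single $l$-edge," and a component of $E_k$ containing a $k$-region's skeleton collapses to a point with a graph of surviving edges emanating from it. Since your entire argument that the quotient is a consistently tiled planar triangle rests on this classification, the global step is not established. A secondary issue: you appeal to "the content condition" to match the three new side lengths, but in the paper that statement (proposition \ref{prop:green}) is itself \emph{proved} using deflation, so as written this is circular; the equality of side lengths should come out of the deflation argument, not go into it.

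The paper sidesteps all global topology with a continuous deformation: for $t\in[0,1]$, keep every angle of every piece fixed and shrink every $k$-labeled edge to length $t$. This is well defined piece by piece precisely because the only pieces with $k$-edges are the $k$-triangle (all three edges shrink, so it contracts homothetically) and rhombi whose two $k$-edges are opposite and parallel (the rhombus degenerates to a segment); no piece has, say, exactly two edges labeled $k$. Adjacent pieces share an edge with a single label and hence agree on its length for all $t$, so the whole complex deforms consistently, the outer boundary remains a triangle for every $t$ (which, incidentally, proves the three sides lose the same number of edges), and the configuration at $t=0$ is $D_S P$. If you wish to keep a purely combinatorial quotient argument, the fix is to import exactly this local observation and let the deformation do the global bookkeeping, rather than attempting to classify the shapes of the collapsed components.
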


\begin{proof}
  It is slightly easier to discuss the case $S^c = \{s\}$, and obtain the
  general case by shrinking one number $s$ at a time.

  Let $t\in [0,1]$, and change the puzzle regions as follows: keep the
  angles the same, but shrink any edge with label $s$ to have length $t$.
  (This wouldn't be possible if e.g. we had triangles with labels
  $s,s,j\neq s$, but we don't.) For $t=1$ this is the original BK-puzzle $P$, 
  and for all $t$ the resulting total shape is a triangle.  Consider now the
  BK-puzzle at $t=0$: all the $s$-edges have collapsed, and each $(i,s)$-
  or $(s,i)$-region has shrunk to an interval, joining two $i$-regions
  together.
\end{proof}

Call this operation the \defn{$S$-deflation} $D_S P$ of the BK-puzzle $P$.

\begin{Proposition}\label{prop:green}
  Let $P$ be a BK-puzzle. Then the content $(n_1,n_2,\ldots,n_d)$ on
  each of the three sides is the same.
  There are $n_i + 1\choose 2$ right-side-up $i$-triangles and
  $n_i\choose 2$ upside-down $i$-triangles, and $n_i n_j$ $(i,j)$-rhombi,
  for all $i$ and $j$.

  More specifically, the number of $(i,j)$-rhombi (for $i>j$) with a
  corner pointing South equals the number of $ij$-inversions on the
  South side. (Similarly for NW or NE.)
\end{Proposition}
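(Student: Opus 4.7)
The overall plan is to reduce each assertion to the simplest possible setting via the deflation operation of Proposition~\ref{prop:deflation}.

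For the content equality, fix a label $i$ and consider $D_{\{i\}}P$: the result is a BK-puzzle on the single-letter alphabet $\{i\}$, hence a triangle tiled entirely by $i$-triangles. Its three equal side lengths coincide with the numbers of $i$-edges on the three sides of $P$. For the triangle counts, $D_{\{i\}}$ preserves each $i$-triangle together with its up/down orientation while collapsing everything else to lower-dimensional strata, so the $i$-triangle counts in $P$ agree with the standard counts $\binom{n_i+1}{2}$ upward and $\binom{n_i}{2}$ downward in the tiled equilateral triangle of side $n_i$. For the $(i,j)$-rhombus count, I would apply the two-letter deflation $D_{\{i,j\}}P$, which yields a Grassmannian puzzle of side $n_i+n_j$ in which all $(i,j)$-rhombi of $P$ survive and no other rhombi remain; the area identity $(n_i+n_j)^2 - n_i^2 - n_j^2 = 2n_i n_j$ then gives $n_i n_j$ rhombi, since each rhombus has area two unit triangles.

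For the count of south-pointing rhombi, $D_{\{i,j\}}$ again reduces us to a Grassmannian puzzle, since this deflation preserves rhombus shape orientations and also $ij$-inversion counts on each side (the latter because deleting letters outside $\{i,j\}$ leaves the relative order of the remaining $i$'s and $j$'s intact). In the two-letter case there is only one rhombus shape to worry about and three orientations to track; I would establish the bijection with south-side $ij$-inversions by induction on $n_i+n_j$, peeling off the piece at a corner of the puzzle (which, by the local constraints at a $60^\circ$ vertex, must be either an up-triangle or a rhombus of a single determined orientation), and checking that both the number of south-pointing rhombi and the number of south-side $ij$-inversions change by the same amount in each case.

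The main obstacle is the case analysis when a corner rhombus is peeled off, since that operation shrinks the boundary word on two adjacent sides simultaneously and can redistribute which interior rhombi are south-pointing. The statements for the NW and NE sides then follow from the three-fold rotational symmetry of a BK-puzzle: under a $120^\circ$ rotation (which takes puzzle pieces to puzzle pieces, since rhombi are permitted in all $60^\circ$ rotations), both the three sides of the triangle and the three rhombus shape orientations are cyclically permuted, so the south-side claim applied to rotated puzzles yields the other two.
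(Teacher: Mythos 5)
Your proof is correct and follows essentially the same route as the paper: both reduce every count to the one-letter deflation $D_{\{i\}}P$ and the two-letter deflation $D_{\{i,j\}}P$ via Proposition~\ref{prop:deflation}, observing that deflation preserves piece orientations and $ij$-inversions. The only difference is that the paper then simply cites the known Grassmannian-puzzle facts (\cite[Proposition 4]{KTW} and \cite[Corollary 2]{KT}) for the rhombus counts and the orientation/inversion correspondence, whereas you propose to reprove the two-letter case by an inductive corner-peeling argument whose case analysis you acknowledge but do not carry out; citing those results closes that remaining step.
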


\begin{proof}
  Deflate all numbers except for $i$, resulting in a triangle of size $n_i$,
  or all numbers except for $i$ and $j$, 
  resulting in a Grassmannian puzzle. 
  Then invoke \cite[proposition 4]{KTW} and \cite[corollary 2]{KT}.
\end{proof}

Now fix $\pi,\rho,\sigma$ of the same content,
and let $\Delta_{\pi\rho}^\sigma$ denote the set of BK-puzzles with
$\pi,\rho,\sigma$ on the NW, NE, and S sides respectively, 
all read left to right. Then $D_S$ on BK-puzzles is a map
$$ D_S : \Delta_{\pi\rho}^\sigma \to \Delta_{D_S \pi, D_S \rho}^{D_S \sigma}. $$

\begin{Corollary}\label{cor:noBKs}
  Let $\pi,\rho,\sigma$ be three words. If they do not have the
  same content, then $\Delta_{\pi\rho}^\sigma = \emptyset$.
  If they have the same content, but for some $i>j$ we have
  $inv_{ij}(\pi) + inv_{ij}(\rho) \neq inv_{ij}(\sigma)$,
  then $\Delta_{\pi\rho}^\sigma = \emptyset$.
\end{Corollary}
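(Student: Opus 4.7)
My plan is to deduce both parts of the corollary directly from Proposition~\ref{prop:green}, using $\{i,j\}$-deflation (Proposition~\ref{prop:deflation}) to isolate the contribution of each pair $i > j$.

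For the content claim, if $P \in \Delta_{\pi\rho}^\sigma$, the first assertion of Proposition~\ref{prop:green} forces $\pi$, $\rho$, and $\sigma$ all to have the common content of $P$'s three sides. Taking the contrapositive yields the first case of the corollary.

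For the inversion claim, I would fix $i > j$ and suppose toward a contradiction that there exists some $P \in \Delta_{\pi\rho}^\sigma$. First apply Proposition~\ref{prop:deflation} with $S = \{i,j\}$ to produce a two-label Grassmannian puzzle $D_{\{i,j\}}(P) \in \Delta_{D_{ij}\pi,\, D_{ij}\rho}^{D_{ij}\sigma}$; observe that $\inv_{ij}(\cdot)$ is preserved under this deflation, since only the $i$'s and $j$'s in a word contribute to its $ij$-inversions. Next, apply the second assertion of Proposition~\ref{prop:green} to $D_{\{i,j\}}(P)$: its $(i,j)$-rhombi split among three orientation classes whose sizes are $\inv_{ij}(\pi)$, $\inv_{ij}(\rho)$, $\inv_{ij}(\sigma)$, and whose sum equals the total $n_i n_j$ given by the first part of the same proposition. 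Combining this one linear identity with the elementary duality $\inv_{ij}(\tau) + \inv_{ij}(\tau^\vee) = n_i n_j$ (valid for any word of the given content) rearranges to $\inv_{ij}(\pi) + \inv_{ij}(\rho) = \inv_{ij}(\sigma)$, contradicting the hypothesis and forcing $\Delta_{\pi\rho}^\sigma = \emptyset$.

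The main obstacle is convention-tracking: specifically, identifying which of $\inv_{ij}(\sigma)$ or $\inv_{ij}(\sigma^\vee)$ corresponds to the rhombi ``pointing S'' in Proposition~\ref{prop:green}, since the S side is read left-to-right. Once that is pinned down against the statement of Proposition~\ref{prop:green}, the remainder is a single-line algebraic rearrangement.
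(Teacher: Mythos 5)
Your argument is the intended one: the paper offers no separate proof of this corollary, treating it as an immediate consequence of Proposition~\ref{prop:green}, and your combination of the total rhombus count $n_i n_j$, the per-side orientation counts, and the duality $\inv_{ij}(\tau)+\inv_{ij}(\tau^\vee)=n_i n_j$ is exactly that deduction (the preliminary $\{i,j\}$-deflation is harmless but unnecessary, since Proposition~\ref{prop:green} applies directly to $P$). The convention you left open does need to be resolved in the direction opposite to what you wrote: the three sides in Proposition~\ref{prop:green} are read consistently clockwise, so the South side carries $\sigma^\vee$ and the South-pointing $(i,j)$-rhombi number $\inv_{ij}(\sigma^\vee)$; the sum identity is then $\inv_{ij}(\pi)+\inv_{ij}(\rho)+\inv_{ij}(\sigma^\vee)=n_i n_j$, which together with $\inv_{ij}(\sigma)+\inv_{ij}(\sigma^\vee)=n_i n_j$ gives $\inv_{ij}(\pi)+\inv_{ij}(\rho)=\inv_{ij}(\sigma)$. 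Note that your literal assignment of $\inv_{ij}(\sigma)$ to the South class would make the sum identity and the duality yield $\inv_{ij}(\pi)+\inv_{ij}(\rho)=\inv_{ij}(\sigma^\vee)$ instead, so this is the one point where the argument genuinely hinges on the reading convention rather than on a cosmetic choice.
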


It is easy to see that any ambiguator $A_\sim$ extends to a map
$$ A_\sim : \Delta_{\pi\rho}^\sigma 
        \to \Delta_{A_\sim \pi, A_\sim \rho}^{A_\sim \pi} $$
which one does not expect to be $1:1$ or onto in general.
The only sort we will use is ``$A_{i]}$'', which
amalgamates all numbers $\leq i$, and all numbers $> i$. 
In particular, each $A_{i]} P$ is a Grassmannian puzzle.

We will need to study a deflation (of the single label $1$)
and an ambiguation together:
$$ A_\oneornot \times D_\onecompl : \Delta_{\pi\rho}^\sigma \to 
\Delta_{A_\oneornot\pi, A_\oneornot\rho}^{A_\oneornot\sigma} \times
\Delta_{D_\onecompl \pi, D_\onecompl \rho}^{D_\onecompl \sigma}. $$
Our key lemma (lemma \ref{lem:AD}) will be that either this map is an
isomorphism or the source is empty. That suggests that we try to define
an inverse map, but to a larger set.

Define the set $(\Delta^1)_{\pi\rho}^\sigma$ of \defn{BK$^1$-puzzles}
to be those made of the following labeled pieces, plus the stipulation that 
only single numbers (not multinumbers like $(53)$) may appear on the boundary
of the puzzle triangle:

\centerline{\epsfig{file=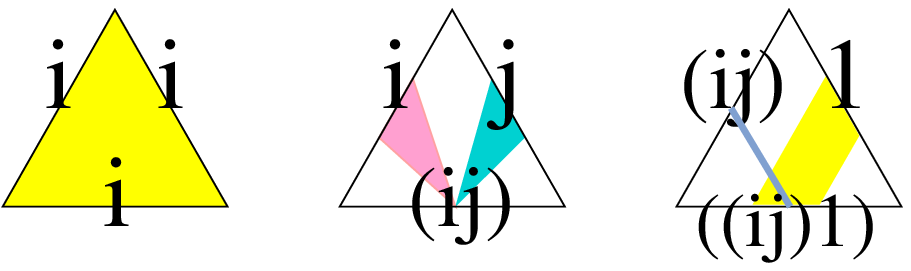,height=0.8in}}

\noindent 
Again $i>j$, and on the third pieces $i>j>1$.
If we disallow the $((ij)1)$ labels (and with them, the third type of piece),
then any triangle of the second type must be matched to another such, 
and we recover an equivalent formulation of $\Delta_{\pi\rho}^\sigma$.
In this way there is a natural inclusion 
$\Delta_{\pi\rho}^\sigma \to (\Delta^1)_{\pi\rho}^\sigma$,
cutting each $(i,j)$-rhombus into two triangles of the second type.
In figure \ref{fig:BK1ex} we give an example of a BK$^1$-puzzle that
actually uses the $((32)1)$ label.

\begin{figure}[ht]
  \centering
  \epsfig{file=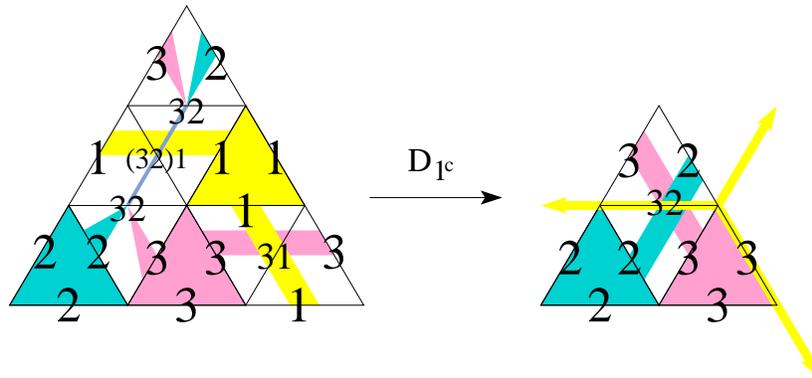,height=2in}
  \caption{The BK$^1$-puzzle on the left deflates to the Grassmannian
    puzzle on the right, which naturally carries a honeycomb
    remembering where the $1$-edges were, 
    as in the proof of lemma \ref{lem:AD}.}
  \label{fig:BK1ex}
\end{figure}

\begin{Lemma}\label{lem:invcount}
  For a word $\tau$, let $Y(\tau) := \sum_i inv_{i1}(\tau) y_i$.
  Then for each puzzle $P \in (\Delta^1)_{\pi\rho}^\sigma$,
  $$ \sum_{e\ \text{ labeled }((ij)1)} y_j - y_i 
  \quad =\quad  Y(\pi) + Y(\rho) - Y(\sigma). $$
\end{Lemma}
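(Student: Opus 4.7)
The plan is to prove Lemma \ref{lem:invcount} by a local-to-global charge-conservation argument, in the spirit of the proof of Proposition \ref{prop:green}.

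First, I would assign to each edge $e$ of a BK$^1$-puzzle $P$ a vector-valued weight $v(e) \in \bigoplus_i \mathbb{Q} y_i$ depending only on the label of $e$ and a fixed canonical orientation on each edge-family. I choose $v$ so that, for each piece type appearing in a BK$^1$-puzzle, a local identity holds: (a) for pieces of the first two types --- unit triangles and the standard $(i,j)$-rhombus pieces --- the signed sum of $v(e)$ around the piece's boundary vanishes; (b) for each third-type piece, which contains an $((ij)1)$-edge with $i>j>1$, the signed sum equals $y_j - y_i$.

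Next, I would sum these local identities over all pieces of $P$. Each interior edge is shared by exactly two pieces and traversed in opposite orientations, so interior contributions cancel; what remains is an identity equating a signed sum over boundary edges of $P$ with $\sum_{e\text{ labeled }((ij)1)}(y_j - y_i)$. To finish, I need the signed boundary sum to equal $Y(\pi) + Y(\rho) - Y(\sigma)$. Since $Y(\tau)$ depends on \emph{pairs} of positions in $\tau$, a purely label-dependent $v$ cannot reproduce it on the boundary. The fix is a running counter: as each side is read left to right, maintain a count of the $1$-edges encountered so far, and assign to each subsequent $i$-edge ($i>1$) the weight (running count)$\,\cdot\,y_i$, with the S side entering with a sign flip so that $\sigma$ contributes with a minus. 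The boundary sum then telescopes to exactly $Y(\pi) + Y(\rho) - Y(\sigma)$.

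The main obstacle is choosing $v$ on the composite-label edges ($(i,j)$, $(i,1)$, and $((ij)1)$) so that the local identities (a) and (b) hold consistently with the position-dependent boundary assignment; this requires a careful case analysis of each piece type and correct orientation bookkeeping. A plausible cleaner route is induction on the number of $((ij)1)$-edges in $P$: the base case has no such edges, so $P$ is an ordinary BK-puzzle and $Y(\pi)+Y(\rho)-Y(\sigma)=0$ follows from the BK inversion conditions (Corollary \ref{cor:noBKs}); the inductive step removes one $((ij)1)$-edge via a local move that modifies both sides of the identity by exactly $y_j - y_i$.
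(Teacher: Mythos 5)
Your diagnosis of the central difficulty is exactly right---$Y(\tau)$ is a pairwise statistic along each boundary word, so no purely label-dependent per-edge weight, summed piece by piece with interior cancellation, can reproduce it on the boundary---but neither of your proposed remedies actually closes the gap. The ``running counter'' is defined only along the boundary read in a fixed order; to run the interior-cancellation argument you would have to extend it consistently to every interior edge, and a position-dependent weight is not a local assignment, so the two pieces sharing an interior edge will in general disagree about its value. The paper's resolution is to make the conserved quantity \emph{quadratic} rather than linear: it assigns to each directed edge a vector $v_e\in\reals^2\tensor\reals^d$ so that the three edges of any piece, oriented cyclically, sum to zero, introduces a rotation-invariant bilinear form $\boxdot$ that is nonzero only on pairs (non-$1$ label, $1$ label), and studies $Y(\Omega)=\sum_{r<s}v_{e_r}\boxdot v_{e_s}$ for a path $\Omega$ from the southwest to the southeast corner of the puzzle. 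On the south side this pairwise sum is precisely your running-counter computation of $Y(\sigma)$, and along the NW--NE boundary it gives $Y(\pi)+Y(\rho)$; the real content of the proof is that deforming $\Omega$ across a single piece changes $Y(\Omega)$ by $0$, except by $y_j-y_i$ when the piece is attached along a $((ij)1)$-edge. That path-deformation step, and the algebraic structure ($v_e$ and $\boxdot$) that makes it checkable piece by piece, is what your write-up is missing.

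Your fallback induction also fails as stated. By definition $((ij)1)$-labels may not appear on the boundary of a BK$^1$-puzzle, so every such edge is interior, and there is no evident ``local move'' that deletes a single $((ij)1)$-edge and produces another valid BK$^1$-puzzle with a controllably modified boundary; asserting that such a move exists and changes $Y(\pi)+Y(\rho)-Y(\sigma)$ by exactly $y_j-y_i$ is essentially a restatement of the lemma, not a proof of it. (The base case is fine: a BK$^1$-puzzle with no $((ij)1)$-edges is an ordinary BK-puzzle, and proposition~\ref{prop:green} or corollary~\ref{cor:noBKs} gives $Y(\pi)+Y(\rho)=Y(\sigma)$.)
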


\begin{proof}
  Consider the vector space $\reals^2 \tensor \reals^d$, where $\reals^2$
  is the plane in which our puzzles are drawn, and $\reals^d$ has
  basis $\{x_1, \dots, x_d\}$.
  Assign to each directed edge $e$ of $P$ a vector
  $v_e \in \reals^2 \tensor \reals^d$, as follows:
  \begin{align*}
    e = \overset{i}{\longrightarrow}
     \qquad&\implies\qquad v_e = \Evec \tensor x_i \\
    e = \overset{(ij)}{\Evec}
     \qquad&\implies\qquad v_e = (\NEvec \tensor x_i) + (\SEvec \tensor x_j) \\
    e = \overset{((ij)1)}{\Evec} 
     \qquad&\implies\qquad v_e = (\NWvec \tensor x_i) + (\Evec \tensor x_j)
                 + (\SWvec \tensor x_1)
  \end{align*}
  If $e$ points in another direction, $v_e$ is rotated accordingly
  ($e \mapsto v_e$ is rotation-equivariant).
  This assignment has the property that if the edges $e,f,g$ of
  a puzzle piece are directed to to form a cycle, then $v_e + v_f + v_g = 0$.
  Consider the bilinear form $\boxdot$ on 
  $\reals^2 \tensor \reals^d$ satisfying:
  \begin{itemize}
     \item $\boxdot$ is rotationally invariant;
     \item $(e \tensor x_i) \boxdot (f \tensor x_j) = 0$, 
            if $i=1$ or $j \neq 1$;
     \item $(\Evec \tensor x_i) \boxdot (\Evec \tensor x_1) = 
           (\Evec \tensor x_i) \boxdot (\SEvec \tensor x_1) = y_i$, 
           if $i \neq 1$.
  \end{itemize}
  These conditions completely determine $\boxdot$.  
  For example, bilinearity and rotational invariance give
  \begin{align*}
  (\NEvec \tensor x_i) \boxdot (\SEvec \tensor x_1) 
   &= 
  (\Evec \tensor x_i) \boxdot (\SWvec \tensor x_1)  
   =
  (\Evec \tensor x_i) \boxdot ((\SEvec - \Evec) \tensor x_1)  \\
   &= 
  (\Evec \tensor x_i) \boxdot (\SEvec \tensor x_1)  -
  (\Evec \tensor x_i) \boxdot (\Evec \tensor x_1)  
   = y_i - y_i = 0 \,.
  \end{align*}

  Let $\Omega = (e_1, \dots, e_m)$ be a path from the southwest corner 
  of $P$ to the southeast corner.
  Let $Y(\Omega) := \sum_{r<s} v_{e_r} \boxdot v_{e_s}$.  We claim the 
  following:
  \begin{enumerate}
    \item If $\Omega$ is the path along the south side, then 
          $Y(\Omega) = Y(\sigma)$.
    \item If $\Omega$ is the path that goes up the northwest side and down the
          northeast side, then $Y(\Omega) = Y(\pi) + Y(\rho)$.
    \item If $\Omega$ is any path, then 
          $Y(\Omega) = Y(\sigma) + \sum_e y_j - y_i$
          where the sum is taken over edges $e$ labeled $((ij)1)$, lying 
          strictly below $\Omega$.
  \end{enumerate}
  The first two assertions are easily checked (the second uses the 
  calculation in the example above).  For the third, we proceed by 
  induction on the number of puzzle pieces below $\Omega$.  
  We show that if we alter the path so as to
  add a single puzzle piece, $Y(\Omega)$ doesn't change, except 
  when new the piece 
  is attached to an edge of $\Omega$ labeled $((ij)1)$, in which case 
  it changes 
  by $y_j - y_i$.  To see this, note that when a piece is added, the 
  sequence $(v_{e_1}, \dots, v_{e_m})$ changes in a very simple way:
  either two consecutive vectors $v_{e_r}$ and $v_{e_{r+1}}$ are replaced 
  by their sum, or the
  reverse---a vector $v_{e_r}$ in the sequence is replaced by two 
  consecutive vectors $v_{e_r'}, v_{e_r''}$ with sum $v_{e_r}$.  When the 
  first happens, $Y(\Omega)$ changes by $- v_{e_r} \boxdot v_{e_{r+1}}$; in
  the second case, by $ v_{e_r'} \boxdot v_{e_r''}$.  It is now a
  simple matter to check that this value is $0$ or $y_j - y_i$ as
  indicated.

  The lemma follows from assertions 1 and 3 in the claim.
\end{proof}

\begin{Lemma}\label{lem:AD}
  Fix $\pi,\rho,\sigma$, and let $c = Y(\pi) + Y(\rho) - Y(\sigma)$ be
  the statistic from lemma \ref{lem:invcount}.

  \begin{enumerate}
  \item 
    The map $A_\oneornot \times D_\onecompl$ defined above factors as
    the inclusion $\Delta_{\pi\rho}^\sigma \to (\Delta^1)_{\pi\rho}^\sigma$
    followed by a bijection $(\Delta^1)_{\pi\rho}^\sigma
    \ \widetilde{\longrightarrow}\
    \Delta_{A_\oneornot\pi, A_\oneornot\rho}^{A_\oneornot\sigma} \times
    \Delta_{D_\onecompl \pi, D_\onecompl \rho}^{D_\onecompl \sigma}$.
  \item If $c=0$, then the inclusion
    $\Delta_{\pi\rho}^\sigma \to (\Delta^1)_{\pi\rho}^\sigma$
    is an isomorphism.
  \item If $c\neq 0$, then $\Delta_{\pi\rho}^\sigma$ is empty.
    If $c$ is not in the cone spanned by $\{y_i - y_{i+1}\}_{i=1,\ldots,d-1}$, 
    then $(\Delta^1)_{\pi\rho}^\sigma$ and at least one of
    $\Delta_{A_\oneornot\pi, A_\oneornot\rho}^{A_\oneornot\sigma}$,
    $\Delta_{D_\onecompl \pi, D_\onecompl \rho}^{D_\onecompl \sigma}$ is empty.
  \end{enumerate}
\end{Lemma}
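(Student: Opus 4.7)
The plan is to prove part (1) by an explicit ``overlay'' construction of the inverse to $A_\oneornot \times D_\onecompl$, and then to deduce parts (2) and (3) from this bijection together with lemma \ref{lem:invcount} and a standard positivity argument.

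For (1), I first check that $A_\oneornot$ and $D_\onecompl$ extend to maps out of $(\Delta^1)_{\pi\rho}^\sigma$: under $A_\oneornot$ every multi-label collapses ($(ij) \mapsto 2$, $((ij)1) \mapsto (21)$), each third-type piece becomes a standard $(2,1)$-configuration, and since boundary labels are single numbers by hypothesis the image is a Grassmannian puzzle in $\{1,2\}$; under $D_\onecompl$ every 1-edge (including the one hidden inside each third-type piece) shrinks to a point, collapsing $((ij)1)$ to $(ij)$ and yielding a BK-puzzle on $\{2,\dots,d\}$. To invert, given a compatible pair $(Q,R)$, I would use $Q$ as a blueprint for where the 1-structure sits and ``inflate'' $R$ accordingly: each vertex of $R$ lies inside a 2-region of $Q$, and each $(i,j)$-rhombus of $R$ either lies entirely in a 2-region of $Q$ and is reinstated as a standard $(i,j)$-rhombus, or is bisected by 1-edges from $Q$ and is reinstated as the unique third-type configuration whose $((ij)1)$-labels bridge the two halves.

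For (2), lemma \ref{lem:invcount} expresses $c$ as a nonnegative sum of vectors $y_j - y_i$ with $i > j > 1$, one for each $((ij)1)$-edge. Each such vector lies in the salient cone spanned by $\{y_k - y_{k+1}\}_{k \geq 2}$, which contains no lines, so the sum vanishes only when it is empty. Thus $c = 0$ forces a BK$^1$-puzzle to have no third-type pieces, i.e.\ to be an ordinary BK-puzzle, and the inclusion is an isomorphism. For (3), any $P \in \Delta_{\pi\rho}^\sigma$ has no $((ij)1)$-edges, so lemma \ref{lem:invcount} gives $c = 0$; contrapositively $c \neq 0$ forces $\Delta_{\pi\rho}^\sigma = \emptyset$. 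If moreover $c$ lies outside the cone spanned by $\{y_i - y_{i+1}\}_{i=1,\dots,d-1}$, the same lemma rules out any element of $(\Delta^1)_{\pi\rho}^\sigma$ (since any nonempty sum of $y_j - y_i$'s lies in that cone), and then the bijection of (1) forces at least one of the two target factors to be empty.

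The main obstacle will be the combinatorial verification in (1): one must check that the reinsertion of 1-edges from $Q$ into $R$ yields exactly one BK$^1$-piece at every local patch, and that interior compatibility of labels is automatic from the boundary compatibility of $Q$ and $R$. This is in the spirit of the inflation arguments for Grassmannian puzzles used in \cite{KTW}, extended to handle the third-type pieces; once this bookkeeping is set up, (2) and (3) are immediate consequences of lemma \ref{lem:invcount}.
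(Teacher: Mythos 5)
Your strategy coincides with the paper's: parts (2) and (3) are argued exactly as in the text (the cone spanned by the $y_j-y_i$ with $i>j>1$ is salient, so $c=0$ forces a BK$^1$-puzzle to have no $((ij)1)$-labels; and if $c$ lies outside the larger cone then $(\Delta^1)_{\pi\rho}^\sigma=\emptyset$, whence by the bijection of (1) one of the two factors is empty), and part (1) is likewise proved by extending $A_\oneornot$ and $D_\onecompl$ to $(\Delta^1)_{\pi\rho}^\sigma$ and building an explicit reinflation inverse. The one genuinely under-specified step is that inverse. You write that ``each vertex of $R$ lies inside a 2-region of $Q$'' and that a rhombus of $R$ may be ``bisected by 1-edges from $Q$,'' but $Q$ and $R$ do not live in the same triangle: $Q$ has size $n$ while $R$ has size $n$ minus the number of $1$s, so these incidences are undefined until you say how the $1$-structure of $Q$ is transported onto $R$. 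The paper's mechanism is the bounded-honeycomb device of \cite[\S 5, theorem 1]{KTW}: deflating the $1$s out of $Q$ produces a trivial triangle of exactly the size of $R$, which carries a bounded honeycomb (a multiplicity function on edges) recording where the collapsed $(*,1)$-rhombi were; one overlays this honeycomb on $R$ and reinflates each honeycomb edge of multiplicity $m$ crossing an edge of label $I$ into $m$ rhombi labeled $(I1)$. Your phrase ``the unique third-type configuration'' also overlooks these multiplicities, since several parallel $1$-rhombi of $Q$ can collapse onto the same segment. So the plan is right and matches the paper, but the ``bookkeeping'' you defer must include this honeycomb (or an equivalent record of the deflated $1$-edges with multiplicity); without it the inverse map is not well-defined.
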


To define the reverse map will require the concept of ``honeycomb''
from \cite{Hon1}.

Consider a ``multiplicity'' function from the $3{n+1 \choose 2}$ edges of
a triangle of size $n$ to the naturals, and define the ``tension'' on a
vertex to be the vector sum of its incident edges (thought of as
outward unit vectors), weighted by these multiplicities. 
Define a \defn{bounded honeycomb} as a multiplicity function such that 
\begin{itemize}
\item the tension of internal vertices is zero,
\item the tension of vertices on the NW edge (except the North corner)
  is horizontal, and
\item the $120^\circ,240^\circ$ rotated statements hold for the 
  remainder of the boundary.
\end{itemize}
One can add two such multiplicity functions, giving an additive
structure on the set of bounded honeycombs of size $n$, called \defn{overlay} 
and denoted $\oplus$ (as it is related to the direct sum operation on
Hermitian matrices \cite{Hon1}).
The \defn{dimension} of a bounded honeycomb is the sum of the multiplicities
on the horizontal edges meeting the NW edge, and is additive under $\oplus$.
A bounded honeycomb has \defn{only simple degeneracies} if all multiplicities
are $1$, and there are no vertices of degree $>4$. If it also has no vertices of
degree $4$, it is \defn{generic}.

If one relieves the tension on the boundary by attaching rays to infinity,
one obtains (a $30^\circ$ rotation of) a \defn{honeycomb} as defined in
\cite{Hon1}. (In this way one can see that the dimension of a bounded honeycomb
is invariant under rotation.)
Bounded honeycombs already arose in \cite[\S 5, theorem 1]{KTW}.

\begin{proof}[Proof of lemma \ref{lem:AD}]
  (1) It is easy to extend $A_\sim$ to a map 
    $ (\Delta^1)_{\pi\rho}^\sigma \to
    \Delta_{A_\oneornot\pi, A_\oneornot\rho}^{A_\oneornot\sigma}$,
    by \\
    \centerline{\epsfig{file=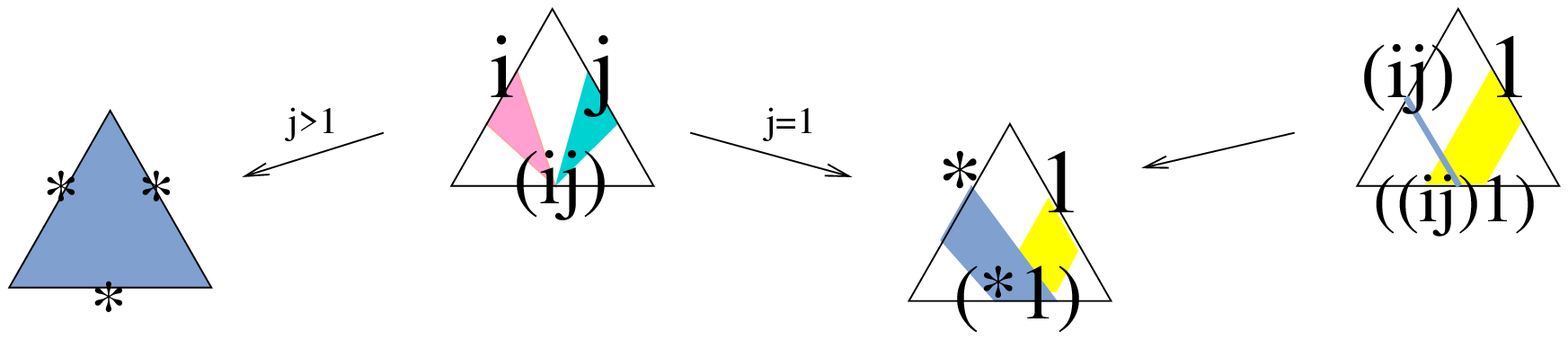,height=1.2in}}
    where $*$ represents the equivalence class ``numbers larger than $1$''.

    To extend $D_\onecompl$ to a map 
    $ (\Delta^1)_{\pi\rho}^\sigma \to \Delta_{D_\onecompl \pi, D_\onecompl \rho}^{D_\onecompl \sigma}$,
    first erase any puzzle edge that has an $(i1)$ or $((ij)1)$ on it, which
    results in a decomposition into triangles and rhombi. 
    Then as before, shrink the $1$-edges.

    Given a pair $(G,P) \in
    \Delta_{A_\oneornot\pi, A_\oneornot\rho}^{A_\oneornot\sigma} \times
    \Delta_{D_\onecompl \pi, D_\onecompl \rho}^{D_\onecompl \sigma}$, we know 
    $D_\onecompl G$ and $P$ are the same size, and $D_\onecompl G$ is a trivial 
    puzzle in the sense that all edges are labeled $*$. But this triangle
    has more structure, if one remembers where the deflated $(*,1)$-rhombi
    are in it: it has a bounded honeycomb of dimension equal to the number
    of $1$s in each of $\pi,\rho,\sigma$. 
    (This is the content of \cite[\S 5, theorem 1]{KTW}.)
    An example is in figure \ref{fig:BK1ex}.

    Take this bounded honeycomb on $D_\onecompl G$ and overlay it on $P$.
    Then ``reinflate'' $P$ to a BK$^1$-puzzle $Q$ with $D_\onecompl Q = P$,
    where each honeycomb edge of multiplicity $m$ and $P$-label $I$
    (which may be $i$ or $(ij)$) is inflated to $m$ $(I,1)$ rhombi
    with label $(I1)$ across their waists. This is the inverse map.

    (2) If $c=0$, by lemma \ref{lem:invcount} 
    each $P \in (\Delta^1)_{\pi\rho}^\sigma$ has no $((ij)1)$ labels,
    and hence is in the image of the inclusion
    $\Delta_{\pi\rho}^\sigma \to (\Delta^1)_{\pi\rho}^\sigma$.

    (3) If $c\neq 0$, by lemma \ref{lem:invcount} 
    every $P \in (\Delta^1)_{\pi\rho}^\sigma$ has some $((ij)1)$ labels,
    and hence is not in the image of the inclusion
    $\Delta_{\pi\rho}^\sigma \to (\Delta^1)_{\pi\rho}^\sigma$.
    So $\Delta_{\pi\rho}^\sigma = \emptyset$.

    By lemma \ref{lem:invcount}, if $P$ is in $(\Delta^1)_{\pi\rho}^\sigma$,
    then $c$ is in the positive span of $\{y_i - y_{i+1}\}_{i=1,\ldots,n-1}$.
    Contrapositively, if $c$ is not in this span, there can be no such $P$.
    Since the empty set $(\Delta^1)_{\pi\rho}^\sigma$ is isomorphic to
    $\Delta_{A_\oneornot \pi, A_\oneornot\rho}^{A_\oneornot\sigma} \times
    \Delta_{D_\onecompl \pi, D_\onecompl \rho}^{D_\onecompl \sigma}$, 
    one of these factors must be empty.
\end{proof}

\begin{Theorem}
  \label{thm:BKcountspuzzles}
  Suppose that $\pi, \rho, \sigma$ have the same content.
  Then the BK coefficient $\BKc_{\pi\rho}^\sigma$ 
  equals the cardinality $|\Delta_{\pi\rho}^\sigma|$.
\end{Theorem}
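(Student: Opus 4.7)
The plan is to induct on the size $d$ of the alphabet. The base case $d=2$ is the Grassmannian setting: BK-puzzles with only two labels coincide with the puzzles of \cite{KT}, and on a Grassmannian the BK product equals the ordinary cup product, so the statement reduces to the Knutson--Tao puzzle theorem. (The case $d=1$ is vacuous.)

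For the inductive step, first dispatch the vanishing cases. If $\pi,\rho,\sigma$ have differing contents, or if $\inv_{ij}(\pi)+\inv_{ij}(\rho)\ne\inv_{ij}(\sigma)$ for some $i>j$, then Proposition~\ref{prop:invcondition} shows $(\pi,\rho,\sigma^\vee)$ is not Levi-movable, so $\BKc_{\pi\rho}^\sigma=0$, while Corollary~\ref{cor:noBKs} gives $\Delta_{\pi\rho}^\sigma=\emptyset$. So assume all $ij$-inversion equalities hold; in particular the statistic $\beta:=Y(\pi)+Y(\rho)-Y(\sigma)$ of Lemma~\ref{lem:invcount} vanishes.

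Under this assumption, Lemma~\ref{lem:AD}(2) identifies $\Delta_{\pi\rho}^\sigma$ with $(\Delta^1)_{\pi\rho}^\sigma$, and Lemma~\ref{lem:AD}(1) then yields a bijection
\[
  \Delta_{\pi\rho}^\sigma \;\widetilde{\longrightarrow}\;
  \Delta_{A_\oneornot\pi,A_\oneornot\rho}^{A_\oneornot\sigma}
  \times
  \Delta_{D_\onecompl\pi,D_\onecompl\rho}^{D_\onecompl\sigma}.
\]
The first factor uses the two-letter alphabet $\{1,*\}$ and is handled by the base case; the second uses the $(d-1)$-letter alphabet $\{2,\ldots,d\}$ and is handled by induction. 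It remains to establish the matching factorization on the BK side.

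For this I would apply Lemma~\ref{lem:coarseproduct} to the equivalence relation that separates $\{1\}$ from $\{2,\ldots,d\}$ (so the one-letter factor $D_{\{1\}}$ contributes a trivial $1$), obtaining
\[
  \BKc_{\pi\rho}^\sigma
  = \BKc_{A_\oneornot\pi,A_\oneornot\rho}^{A_\oneornot\sigma}
    \cdot \BKc_{D_\onecompl\pi,D_\onecompl\rho}^{D_\onecompl\sigma}
\]
whenever $(\pi,\rho,\sigma^\vee)$ is Levi-movable. The main obstacle is the residual subcase where every $ij$-inversion equality holds but $c_{\pi\rho}^\sigma=0$, so Levi-movability fails only for dimension-of-intersection reasons and Lemma~\ref{lem:coarseproduct} does not directly apply. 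Here both sides of the desired identity should be $0$: the ordinary Schubert factorization $c_{\pi\rho}^\sigma = c_{A_\oneornot}\cdot c_{D_\onecompl}$ still holds (it is the content of the transversality step inside the proof of Lemma~\ref{lem:coarseproduct}, and needs only the inversion-count balance), so one of the factor $c$'s vanishes; since the balanced inversion counts descend to each sub-triple, the corresponding sub-$\BKc$ is also zero, and by induction so is the corresponding sub-$|\Delta|$. The puzzle factorization above then forces $|\Delta_{\pi\rho}^\sigma|=0$, and combining with the inductive hypothesis in the nonzero case completes the argument.
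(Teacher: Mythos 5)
Your proof is correct and follows essentially the same route as the paper's: induction on $d$ with the Grassmannian case $d=2$ as base, vanishing handled via Proposition~\ref{prop:invcondition} and Corollary~\ref{cor:noBKs}, the puzzle bijection supplied by Lemma~\ref{lem:AD}, and the matching factorization of $\BKc_{\pi\rho}^\sigma$ supplied by Lemma~\ref{lem:coarseproduct} applied to the equivalence relation separating $1$ from the other labels. Your explicit treatment of the residual subcase (all inversion balances hold but $c_{\pi\rho}^\sigma=0$, so Levi-movability fails and Lemma~\ref{lem:coarseproduct} does not literally apply) addresses a point the paper's proof passes over silently, and your resolution---both sides vanish, since the ordinary intersection-number factorization needs only the inversion balance and the inductive hypothesis then kills the corresponding puzzle factor---is sound.
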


\begin{proof}
If $\inv_{ij}(\pi) + \inv_{ij}(\rho) \neq \inv_{ij}(\sigma)$ for some
$i > j$, then $\BKc_{\pi\rho}^\sigma = 0 = |\Delta_{\pi\rho}^\sigma|$,
by proposition~\ref{prop:invcondition} and corollary~\ref{cor:noBKs}.

If $\inv_{ij}(\pi) + \inv_{ij}(\rho) = \inv_{ij}(\sigma)$ for all
 $i > j$, the statistic from lemma~\ref{lem:invcount} is zero,
and we proceed by induction on $d$.  If $d=2$,
this is the Grassmannian case, where the result is known, so assume
$d \geq 3$. From lemma~\ref{lem:AD}, we have
\[
   \Delta_{\pi\rho}^\sigma
    \ \widetilde{\longrightarrow}\
    \Delta_{A_\oneornot\pi, A_\oneornot\rho}^{A_\oneornot\sigma} \times
    \Delta_{D_\onecompl \pi, D_\onecompl \rho}^{D_\onecompl \sigma}\,.
\]
By lemma~\ref{lem:coarseproduct} (applied to the equivalence relation
``\oneornot'') we have
\[
   \BKc_{\pi\rho}^\sigma
    =
    \BKc_{A_\oneornot\pi, A_\oneornot\rho}^{A_\oneornot\sigma} \cdot
    \BKc_{D_\onecompl \pi, D_\onecompl \rho}^{D_\onecompl \sigma}\,.
\]
Inducting on $d$, we have
$\BKc_{A_\oneornot\pi, A_\oneornot\rho}^{A_\oneornot\sigma}
= |\Delta_{A_\oneornot\pi, A_\oneornot\rho}^{A_\oneornot\sigma}|$, 
$\BKc_{D_\onecompl \pi, D_\onecompl \rho}^{D_\onecompl \sigma} =
|\Delta_{D_\onecompl \pi, D_\onecompl \rho}^{D_\onecompl \sigma}|$
and so 
$\BKc_{\pi\rho}^\sigma = 
   |\Delta_{\pi\rho}^\sigma|$
as required.
\end{proof}  

\begin{Theorem}\label{thm:assembly}
  Let $\pi,\rho,\sigma$ be words with the same content, and
  for all $i>j$, $inv_{ij}(\pi) + inv_{ij}(\rho) = inv_{ij}(\sigma)$.
  Amalgamate the maps $D_{ij} : \Delta_{\pi\rho}^\sigma \to
  \Delta_{D_{ij} \pi, D_{ij} \rho}^{D_{ij} \sigma} $ into a single map
  $$ (D_{ij}) : \Delta_{\pi\rho}^\sigma 
  \To \prod_{i>j} \Delta_{D_{ij} \pi, D_{ij} \rho}^{D_{ij} \sigma}. $$
  Then this map is a bijection.
\end{Theorem}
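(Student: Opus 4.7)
The plan is to prove the theorem by induction on the alphabet size $d$. The base case $d = 2$ is immediate: there is only one pair $(2, 1)$, and $D_{21}$ is the identity map on $\Delta_{\pi\rho}^\sigma$, so the amalgamated map is trivially bijective.

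For the inductive step at $d \geq 3$, the inversion-matching hypothesis forces the statistic $c$ of lemma~\ref{lem:invcount} to vanish, so lemma~\ref{lem:AD}(1)(2) supplies a bijection
\[
\Delta_{\pi\rho}^\sigma \;\xrightarrow{\;\sim\;}\; \Delta_{A_\oneornot\pi, A_\oneornot\rho}^{A_\oneornot\sigma} \;\times\; \Delta_{D_\onecompl\pi, D_\onecompl\rho}^{D_\onecompl\sigma}.
\]
The second factor is a BK-puzzle set on the $(d{-}1)$-letter alphabet $\{2, \ldots, d\}$, and the induction hypothesis applied to it---using $D_{ij}D_\onecompl = D_{ij}$ when $i, j \geq 2$---identifies it bijectively with $\prod_{d \geq i > j \geq 2} \Delta_{D_{ij}\pi, D_{ij}\rho}^{D_{ij}\sigma}$, compatibly with the deflation projections. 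The theorem therefore reduces to establishing the Grassmannian sub-bijection
\[
\Delta_{A_\oneornot\pi, A_\oneornot\rho}^{A_\oneornot\sigma} \;\xrightarrow{\;\sim\;}\; \prod_{i > 1} \Delta_{D_{i1}\pi, D_{i1}\rho}^{D_{i1}\sigma}.
\]

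Proving this Grassmannian sub-bijection is the main obstacle. My plan is to exploit the bounded-honeycomb description used in the proof of lemma~\ref{lem:AD}: the inverse there realizes an element of $\Delta_{A_\oneornot\pi, \ldots}$ as a bounded honeycomb of dimension $n_1$ laid on top of the BK-puzzle $D_\onecompl P$ on $\{2, \ldots, d\}$. The hypothesis $c = 0$ (equivalently, lemma~\ref{lem:AD}(2)) guarantees that each honeycomb edge carries a single color $i \in \{2, \ldots, d\}$, inherited from the region of $D_\onecompl P$ within which it lies, rather than a pair-color $(ij)$. I would then decompose the honeycomb into its per-color components: for each $i > 1$, the color-$i$ portion, restricted to the union of $i$-regions of $D_\onecompl P$, should promote to a bounded honeycomb on a triangle of size $n_1 + n_i$, equivalently an element of $\Delta_{D_{i1}\pi, \ldots}$. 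The inverse map is color-by-color reassembly. The crux of the argument---and the tedious but essential technical step---is verifying that the per-color decomposition yields genuine honeycombs, especially at vertices on region boundaries where honeycomb strands cross or branch. I expect this to follow from a vertex-by-vertex case analysis in the spirit of the honeycomb arguments used for lemma~\ref{lem:AD}.
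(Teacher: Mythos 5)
Your overall architecture---induction on $d$, using lemma \ref{lem:AD}(1)(2) to identify $\Delta_{\pi\rho}^\sigma$ with $\Delta_{A_\oneornot\pi, A_\oneornot\rho}^{A_\oneornot\sigma}\times\Delta_{D_\onecompl\pi, D_\onecompl\rho}^{D_\onecompl\sigma}$ and handling the factors with $i>j\geq 2$ by the inductive hypothesis---is exactly the paper's. One caveat on the reduction itself: the ``Grassmannian sub-bijection'' you reduce to is not a single canonical map $\Delta_{A_\oneornot\pi, A_\oneornot\rho}^{A_\oneornot\sigma}\to\prod_{i>1}\Delta_{D_{i1}\pi, D_{i1}\rho}^{D_{i1}\sigma}$, because $D_{i1}Q$ is not a function of $A_\oneornot Q$ alone; what must actually be shown is that for each fixed $P$ in the second factor, the map $G\mapsto(D_{i1}Q)_{i>1}$ with $Q=(A_\oneornot\times D_\onecompl)^{-1}(G,P)$ is a bijection. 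Your second paragraph does implicitly use $P$, so this is an imprecision rather than a fatal error.

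The genuine gap is that the entire remaining content of the theorem is concentrated in the ``color-by-color reassembly'' you defer to an unspecified vertex-by-vertex analysis. The forward direction (splitting the dimension-$n_1$ honeycomb on the $*$-triangle according to which region of $P$ each edge lies in) is essentially proposition \ref{prop:deflation}; the problem is the inverse. In your setup the color-$i$ pieces live on the \emph{different} deflated triangles $D_iP$ (of sizes $n_i$), and reassembling them into one bounded honeycomb requires the strands of the various $G_{i1}$ to match up across the internal region boundaries of $P$ --- a global compatibility condition among the $G_{i1}$, not a local vertex condition, and you give no reason it should hold. The paper sidesteps this entirely by passing to the dual picture: it deflates each $G_{i1}$ down to the \emph{common} triangle of size $n_1$ (keeping only the $1$s), where $G_{i1}$ becomes a bounded honeycomb $h_i$ of dimension $n_i$, and combines them by the overlay $\oplus_{i>1}h_i$, which is mere addition of multiplicity functions and needs no compatibility check; inflating the overlay produces $G=A_\oneornot Q$, and lemma \ref{lem:AD} then reconstructs $Q$ from the pair $(G,P)$. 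To complete your argument you should either switch to this $1$-deflated picture or supply the missing gluing argument; as written, the reassembly step is unproven.
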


An example of the image is in figure \ref{fig:deflations}.

\begin{figure}
  \centering
  \epsfig{file=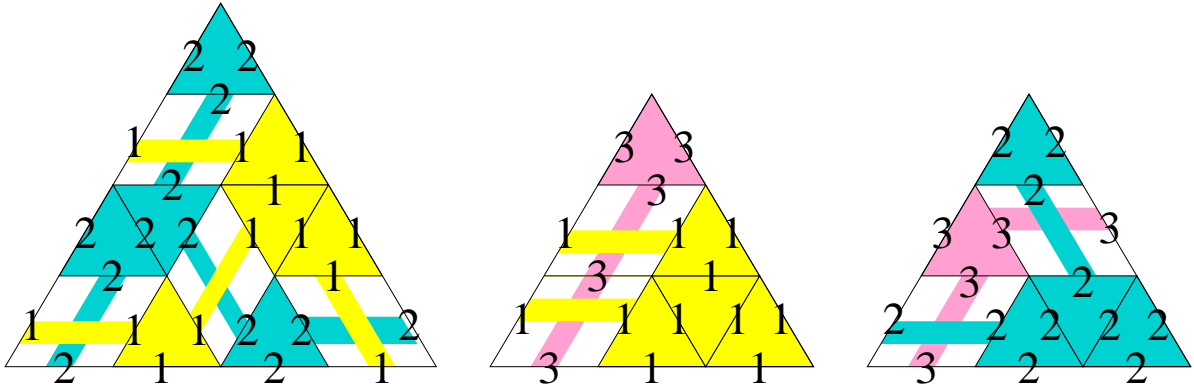,height=2in}
  \caption{The $3\choose 2$ deflations of the BK-puzzle from 
    figure \ref{fig:puzex1}, from which it may be reassembled 
    as in theorem \ref{thm:assembly}.}
  \label{fig:deflations}
\end{figure}

\begin{proof}
  If the number of labels is $1$ or $2$, the statement is trivial.
  So assume it is at least $3$.

  We need to define the reverse map, 
  associating a puzzle $Q\in \Delta_{\pi\rho}^\sigma$
  to a tuple 
  $(G_{ij} \in \Delta_{D_{ij} \pi, D_{ij} \rho}^{D_{ij} \sigma})_{i>j}$. 
  By induction on the number of labels,
  we may assume that there exists a puzzle $P$ (with no $1$s on its boundary)
  mapping to the tuple $(G_{ij})_{i>j>1}$. 
  Then the desired $Q$ must have $D_\onecompl Q = P$.

  Next we define a Grassmannian puzzle $G$ from the tuple $(G_{i1})_{i>1}$. 
  Each $D_1 G_{i1}$ is a triangle of the same size, all edges labeled $1$,
  but bearing a bounded honeycomb $h_i$ as in the proof of lemma \ref{lem:AD}.
  Then $\oplus_{i>1} h_i$ is a bounded honeycomb in this same triangle.
  Again as in the proof of lemma \ref{lem:AD}, inflate $\oplus_{i>1} h_i$ 
  to produce a puzzle $G$ with two labels $* > 1$. With this we similarly
  constrain $Q$, by $A_\oneornot Q = G$.

  Now use lemma \ref{lem:AD} parts (1) and (2) to construct $Q$
  from the pair $(P,G)$. 
\end{proof}

\begin{Theorem}\label{thm:fineproduct}
For all $\pi, \rho, \sigma$ as in theorem~\ref{thm:BKcountspuzzles},
\[
  \BKc_{\pi\rho}^\sigma = 
  \prod_{i>j} c_{D_{ij} \pi, D_{ij} \rho}^{D_{ij} \sigma}\,.
\]
\end{Theorem}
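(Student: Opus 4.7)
The plan is to assemble the result directly from the two puzzle theorems established just before the statement, together with the observation that BK-puzzles on a two-letter alphabet are exactly Grassmannian puzzles in the sense of \cite{KT}.

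First I would dispose of the degenerate case. If $\inv_{ij}(\pi) + \inv_{ij}(\rho) \neq \inv_{ij}(\sigma)$ for some $i>j$, then $\BKc_{\pi\rho}^\sigma = 0$ by proposition \ref{prop:invcondition}. On the right-hand side, the corresponding factor $c_{D_{ij}\pi,\,D_{ij}\rho}^{D_{ij}\sigma}$ is the Grassmannian (two-step) LR number for words whose $ij$-inversion counts fail to balance; since the codimensions of the three Schubert varieties do not sum to the dimension of the Grassmannian, this factor is $0$. Hence both sides vanish.

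From now on assume $\inv_{ij}(\pi)+\inv_{ij}(\rho) = \inv_{ij}(\sigma)$ for all $i>j$. Then by theorem \ref{thm:BKcountspuzzles},
\[
  \BKc_{\pi\rho}^\sigma \;=\; |\Delta_{\pi\rho}^\sigma|.
\]
The hypothesis of theorem \ref{thm:assembly} is exactly the assumption we just made, so I can apply the bijection
\[
  (D_{ij}) : \Delta_{\pi\rho}^\sigma \;\xrightarrow{\sim}\;
  \prod_{i>j} \Delta_{D_{ij}\pi,\,D_{ij}\rho}^{D_{ij}\sigma},
\]
giving $|\Delta_{\pi\rho}^\sigma| = \prod_{i>j} |\Delta_{D_{ij}\pi,\,D_{ij}\rho}^{D_{ij}\sigma}|$.

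It remains to identify each factor $|\Delta_{D_{ij}\pi,\,D_{ij}\rho}^{D_{ij}\sigma}|$ with $c_{D_{ij}\pi,\,D_{ij}\rho}^{D_{ij}\sigma}$. Since $D_{ij}\pi, D_{ij}\rho, D_{ij}\sigma$ are words on the two-letter alphabet $\{i,j\}$, the BK-puzzles counting this cardinality are precisely Grassmannian puzzles. For a Grassmannian target, the Belkale-Kumar product coincides with the cup product, so one may equivalently invoke theorem \ref{thm:BKcountspuzzles} in the Grassmannian case, or simply the original puzzle rule of \cite{KT}, to conclude $|\Delta_{D_{ij}\pi,\,D_{ij}\rho}^{D_{ij}\sigma}| = c_{D_{ij}\pi,\,D_{ij}\rho}^{D_{ij}\sigma}$. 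Combining the displayed equalities yields the theorem.

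The only potential subtlety, and the one I would double-check most carefully, is the first step: verifying that whenever the inversion identities fail for some pair $(i,j)$, the corresponding two-letter factor on the right really is zero (not just the product). This is immediate from a dimension count on the relevant Grassmannian, but it is the place where the two definitions of "the factorization vanishes" must be reconciled. Beyond that, the proof is a direct concatenation of theorems \ref{thm:BKcountspuzzles} and \ref{thm:assembly} with the Grassmannian puzzle rule.
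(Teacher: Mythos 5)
Your proposal is correct and takes essentially the same approach as the paper, whose proof is simply the concatenation of theorems~\ref{thm:BKcountspuzzles} and~\ref{thm:assembly} that you spell out. Your extra care with the degenerate case (where some $\inv_{ij}$ identity fails, so both sides vanish) and with identifying each two-letter factor $|\Delta_{D_{ij}\pi,\,D_{ij}\rho}^{D_{ij}\sigma}|$ with the Grassmannian Littlewood--Richardson number fills in details the paper leaves implicit.
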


\begin{proof} This follows immediately from 
theorems~\ref{thm:BKcountspuzzles} and~\ref{thm:assembly}.
\end{proof}

\begin{Remark}
It has been observed several times now (e.g. \cite{DW,KTT})
that when a Horn inequality
is satisfied with equality, a Littlewood-Richardson number factors.  
This fact can be seen from theorem~\ref{thm:fineproduct} as follows.

Let $\pi, \rho, \sigma \in \{1,2,3\}^*$ be words of length $n$. 
Let $\overline{\pi} = A_{2]}\pi$
and $\pi' = D_{12}(\pi)$, with similar notation for $\rho$, $\sigma$,
and assume that $c_{\pi'\rho'}^{\sigma'} \neq 0$.

Under these conditions, $\inv_{ij}(\pi) + \inv_{ij}(\rho) =
\inv_{ij}(\sigma)$ for all $i > j$ is equivalent to asserting that the
Horn inequality for $(\overline\pi, \overline\rho, \overline\sigma)$
associated to $(\pi',\rho',\sigma')$ holds with equality 
(see~\cite[\S 4]{PurSot}).  With this hypothesis, the Littlewood-Richardson
number $c_{\overline\pi \overline\rho}^{\overline\sigma}$ factors as
\[
 c_{\overline\pi \overline\rho}^{\overline\sigma}
   = c_{D_{23}\pi,D_{23}\rho}^{D_{23}\sigma} \cdot
    c_{D_{13}\pi,D_{13}\rho}^{D_{13}\sigma} \cdot\,.
\]
This can be seen by comparing the result of 
lemma~\ref{lem:coarseproduct}
\[
  \BKc_{\pi\rho}^\sigma 
  = c_{\overline\pi \overline\rho}^{\overline\sigma}
  \cdot c_{\pi' \rho'}^{\sigma'} 
\]
and theorem \ref{thm:fineproduct}
\[
  \BKc_{\pi\rho}^\sigma 
  = c_{D_{23}\pi, D_{23}\rho}^{D_{23}\sigma} \cdot
    c_{D_{13}\pi,D_{13}\rho}^{D_{13}\sigma} \cdot
    c_{\pi' \rho'}^{\sigma'}\,.
\]
\end{Remark}

\section{Relation to extremal honeycombs}\label{sec:extremal}

Let $\reals^n_+$ denote the cone of weakly decreasing $n$-tuples.
The \defn{Littlewood-Richardson cone} $\LRn \subset (\reals^n_+)^3$ is a 
rational polyhedral cone whose elements $(\lambda,\mu,\nu)$ can be
characterized in several ways including the following \cite{Fulton,KTW}:
\begin{itemize}
\item There exists a triple $(H_\lambda,H_\mu,H_\nu)$ of Hermitian matrices
  of size $n$, adding to zero, whose spectra are $(\lambda,\mu,\nu)$.
\item There exists a honeycomb $h$ of size $n$ whose boundary edges $\partial h$
  have constant coordinates given by $(\lambda,\mu,\nu)$.
  (We will not use this characterization, and refer the interested reader
  to \cite{Hon1} for definitions.)
\item (If $(\lambda,\mu,\nu)$ are integral, hence may be thought of
  as dominant weights for $GL_n(\complexes)$.) 
  There is a $GL_n(\complexes)$-invariant vector in the tensor product
  $V_\lambda \tensor V_\mu \tensor V_\nu$ of irreducible representations
  with those high weights.
\item For each puzzle $P$ of size $n$, with boundary labels $0,1$, the 
  inequality $NW(P) \cdot \lambda + NE(P) \cdot \mu + S(P) \cdot \nu \leq 0$
  holds, where $NW(P),NE(P),S(P)$ are the vectors of $0$s and $1$s around the
  puzzle \emph{all read clockwise} and $\cdot$ is the dot product.
\end{itemize}
The fourth says that each inequality defining $\LRn$ (other than the
\defn{chamber inequalities} that say $\lambda,\mu,\nu$ are decreasing)
can be ``blamed'' on a Grassmannian puzzle.  In \cite{KTW} it was shown
that the puzzles that occur this way are exactly the \defn{rigid}
ones, meaning that they are uniquely determined by their boundaries.
(The others define valid, but redundant, inequalities.)

In this section we extend this last connection to one between all 
\defn{regular} faces of $\LRn$ (meaning, not lying on chamber walls)
and rigid BK-puzzles. Then the connection between BK-puzzles and the
BK product gives a new proof of \cite[Theorem D]{RessayreGIT1},
corresponding the regular faces to BK coefficients equaling $1$.

This section closely follows \cite[\S 3 and \S 4]{KTW}, and we will
only point out where the proofs there need other than trivial modification.
In any case, much of it can be avoided by {\em invoking} 
\cite[Theorem D]{RessayreGIT1}, rather than reproving it combinatorially.

\begin{Lemma}\label{lem:lem3}  (Extension of \cite[lemma 3]{KTW}.)
  Let $b$ be a generic point on a regular face of $\LRn$ of codimension 
  $d-1$. Then there exists a honeycomb $h$ with $\partial h = b$ such that
  \begin{itemize}
  \item $h = h_1 \oplus h_2 \oplus \cdots \oplus h_d$
    is an overlay of generic honeycombs,
  \item $h$ has simple degeneracies, and
  \item $h_i$ intersects $h_j$ transversely for each $j<i$, 
    and at each point $p$ where $h_i$ crosses $h_j$, 
    $h_j$ \defn{turns clockwise to} $h_i$, meaning that a path 
    going from $h_j$ to $h_i$ through $p$ could turn right $60^\circ$,
    not left $60^\circ$ (where continuing straight is turning $0^\circ$).
  \end{itemize}
\end{Lemma}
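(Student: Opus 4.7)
The plan is to adapt the strategy of \cite[\S 3]{KTW} and proceed by induction on the codimension $d-1$ of the face. The base case $d=2$ is exactly \cite[lemma 3]{KTW}, which yields a decomposition $h = h_1 \oplus h_2$ with the claimed transversality, simple degeneracy, and clockwise-turning properties.

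For the inductive step, I would select a regular face $F' \supset F$ of codimension $d-2$ and apply the inductive hypothesis to a generic point $b' \in F'$ close to $b$, obtaining an overlay $h' = h'_1 \oplus \cdots \oplus h'_{d-1}$ of generic honeycombs satisfying the three conditions. As $b'$ is moved along a straight line to $b$, exactly one additional facet inequality of $\LRn$ becomes tight. Because $F$ is regular, this new tight inequality is not a chamber wall, so by the $d=2$ case applied to the affected summand, that summand $h'_k$ must degenerate and split as an overlay $h_a \oplus h_b$ of two generic honeycombs meeting transversely with the correct clockwise turning. Re-indexing yields the desired decomposition $h = h_1 \oplus \cdots \oplus h_d$.

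The transversality and turning properties for the crossings of $h_a$ against $h_b$ are supplied by the codimension-$1$ argument. Those for crossings of $h_a$ or $h_b$ against some other $h'_i$ with $i \neq k$ are inherited from the crossings of $h'_k = h_a \oplus h_b$ against $h'_i$, since the clockwise-to relation depends only on the local orientations of crossing edges and not on how $h'_k$ is further refined. Genericity of $b$ on $F$ guarantees that all multiplicities remain $1$ and no vertices of degree $>4$ appear, so simple degeneracies persist.

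The main obstacle will be verifying that the $d$ resulting pieces admit a linear ordering making ``clockwise to'' hold uniformly for every pair $j < i$, i.e.\ that the binary clockwise-to relation is acyclic and extends to a total order. Genericity of $b$ on a regular face should exclude the degenerate crossing configurations that would create a $3$-cycle (these would either force a chamber-wall equality or a vertex of degree $>4$), but pinning this down rigorously requires the kind of local crossing analysis carried out in \cite[\S 3]{KTW}. As the paper notes, this combinatorial work can alternatively be bypassed entirely by invoking \cite[Theorem D]{RessayreGIT1} together with theorems \ref{thm:BKcountspuzzles} and \ref{thm:assembly}, and then reading the decomposition $h_1 \oplus \cdots \oplus h_d$ off the rigid BK-puzzle corresponding to $F$.
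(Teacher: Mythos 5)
The inductive step of your argument has a genuine gap. You move a generic point $b'$ of a codimension-$(d-2)$ face $F'$ toward $b$ and assert that ``the affected summand $h'_k$ must degenerate and split'' by the $d=2$ case. But the inductive hypothesis only gives, for each such $b'$, the \emph{existence} of some overlay $h'=h'_1\oplus\cdots\oplus h'_{d-1}$ with the stated properties; it does not produce a family of honeycombs varying continuously in $b'$. So there is no well-defined limit as $b'\to b$, no guarantee that a chosen limit still has simple degeneracies, and no identification of a single ``affected summand.'' Moreover, \cite[lemma 3]{KTW} is a statement about generic points on regular \emph{facets of $\LRn$}; the summand $h'_k$ is a honeycomb of some smaller size $m$ whose boundary is typically an interior point of the size-$m$ cone, so the $d=2$ case cannot simply be ``applied to the affected summand.'' These are not loose ends one could tighten with the same strategy; they are the entire content of the lemma.

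The paper's proof takes a different, much more direct route, working entirely at $b$. The quantitative content of the proof of \cite[lemma 3]{KTW} is that a honeycomb of size $m$ with simple degeneracies which is \emph{not} a clockwise overlay admits a $(3m-1)$-dimensional space of perturbations. Taking $h$ with $\partial h=b$ and simple degeneracies, if $h$ is written as a clockwise overlay of fewer than $d$ pieces none of which splits further, summing these perturbation dimensions over the pieces contradicts the fact that $b$ is a generic point of a face of codimension $d-1$; hence some piece must itself be a clockwise overlay, and one iterates until there are $d$ pieces. This recursive splitting also resolves the acyclicity worry you flag at the end: each step refines one piece into two, and the crossings of a sub-piece with any other piece form a subset of the crossings of its parent, so the clockwise-to total order is inherited automatically --- no separate argument excluding $3$-cycles is needed. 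Your fallback of invoking \cite[Theorem D]{RessayreGIT1} is indeed mentioned by the paper as an alternative to the combinatorics, but it does not by itself prove this lemma as stated: one still needs the passage from a rigid BK-puzzle back to a clockwise overlay, which is the content of proposition \ref{prop:prop3}, not of theorems \ref{thm:BKcountspuzzles} and \ref{thm:assembly}.
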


If $h = h_1 \oplus h_2 \oplus \cdots \oplus h_d$ satisfies this
third condition, call it a \defn{clockwise overlay}.

\begin{proof}
  The proof of \cite[lemma 3]{KTW} goes by showing that a honeycomb of size $m$
  with simple degeneracies that is \emph{not} a clockwise overlay has
  a $(3m-1)$-dimensional space of perturbations. So if $h$ is written
  as a clockwise overlay of fewer than $d-1$ honeycombs (e.g. as itself), 
  one of them must itself be a clockwise overlay.
\end{proof}

This already implies the interesting fact that while $\LRn$ has faces
of all dimensions $2,\ldots,3n-1$, its regular faces are of dimension
at least $2n$. 

\begin{Lemma}\label{lem:lem4} (Extension of \cite[theorem 2 and lemma 4]{KTW}.)
  Let $h = h_1 \oplus h_2 \oplus \cdots \oplus h_d$ be a clockwise overlay.
  Then there is a codimension $d-1$ regular face $F$ of $\LRn$ containing
  $\partial h$. It is the intersection of $d-1$ regular facets.

  Moreover, one can construct from $h$ a BK-puzzle $P$ with $d$ labels,
  and for each label $i<d$ one can construct a Grassmannian puzzle
  $A_{i]} P$, such that $F$ is the intersection
  of the facets corresponding to $(A_{i]} P)$.
\end{Lemma}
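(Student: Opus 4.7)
The strategy is to reduce to the two-label KTW correspondence between clockwise overlays and rigid Grassmannian puzzles, using the assembly theorem~\ref{thm:assembly} to splice pairwise data into one BK-puzzle, and using \cite[theorem 2 and lemma 4]{KTW} to convert rigid Grassmannian puzzles into regular facets of $\LRn$.

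First, for each pair $i>j$, the two-layer overlay $h_j\oplus h_i$ inherits from $h$ the property of being a generic, simply-degenerate clockwise overlay. By \cite[theorem 2 and lemma 4]{KTW} it is realized by a unique rigid Grassmannian puzzle $G_{ij}$ with labels $\{j,i\}$, whose boundary records which edges of $\partial h$ come from $h_j$ versus $h_i$. I would check that these ${d\choose 2}$ puzzles form a compatible tuple --- their boundaries are the deflations $D_{ij}\pi$, $D_{ij}\rho$, $D_{ij}\sigma$ of common words $\pi,\rho,\sigma$ read off $\partial h$, and the inversion-count hypothesis of theorem~\ref{thm:assembly} holds because each $G_{ij}$ exists --- and then invoke theorem~\ref{thm:assembly} to produce the unique BK-puzzle $P\in\Delta_{\pi\rho}^\sigma$ with $D_{ij}P=G_{ij}$ for every $i>j$.

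Next, for each $i<d$, I would identify $A_{i]}P$ with the rigid Grassmannian puzzle $G^{(i)}$ that KTW associates to the two-layer clockwise overlay $(h_1\oplus\cdots\oplus h_i)\oplus(h_{i+1}\oplus\cdots\oplus h_d)$. Both $A_{i]}P$ and $G^{(i)}$ are two-label puzzles with the same boundary, and both encode the same crossing pattern of honeycomb strands; the identification follows by iterating the ``$A\times D$'' analysis of lemma~\ref{lem:AD} for the equivalence relation amalgamating $\{k\leq i\}$ and $\{k>i\}$, where the clockwise-overlay hypothesis is exactly what forces the inflated $(*,\bullet)$-rhombi to be oriented consistently with the KTW bijection. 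By the fourth bullet of the characterization of $\LRn$, each rigid puzzle $A_{i]}P$ then defines a regular facet $F_i$ of $\LRn$, and $\partial h\in F_i$ because $h$ realizes equality in the associated inequality. Set $F:=F_1\cap\cdots\cap F_{d-1}$; this is a regular face of $\LRn$ containing $\partial h$.

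The main obstacle is proving $\codim F = d-1$, i.e.\ that the $d-1$ facet-defining functionals are linearly independent. For this I would use that each $h_i$ has positive dimension (by lemma~\ref{lem:lem3}) and that the $i$-th functional depends only on the split of $\partial h$ between layers $\{k\leq i\}$ and $\{k>i\}$: rescaling $h_i$ relative to $h_{i+1}$ while fixing the other layers gives a deformation of $\partial h$ within $\LRn$ that moves off $F_i$ but preserves all $F_{i'}$ with $i'\ne i$. Exhibiting $d-1$ such independent deformations yields $\codim F = d-1$ and simultaneously identifies $F$ as the intersection of exactly those $d-1$ regular facets.
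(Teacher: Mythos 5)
Your construction of $P$ is more roundabout than the paper's but not wrong: the paper builds $P$ directly from $h$ by the honeycomb--puzzle duality (each $3$-valent vertex of $h_i$ becomes an $i$-triangle, each crossing of $h_i$ with $h_j$ becomes an $(i,j)$-rhombus, the clockwise condition supplying $i>j$), and your route through the pairwise KTW puzzles $G_{ij}$ and theorem~\ref{thm:assembly} reconstructs the same $P$, since $D_{ij}$ of the directly-built puzzle is exactly the KTW puzzle of $h_j\oplus h_i$. Likewise your identification of $A_{i]}P$ with the puzzle of the grouped two-layer overlay is correct and is really immediate from the direct construction, without needing to re-run lemma~\ref{lem:AD}.

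The genuine gap is in the codimension argument. First, the specific deformation you propose does not do what you claim: rescaling $h_i$ relative to $h_{i+1}$ (or any perturbation of the individual layers that keeps $h$ a clockwise overlay of the same combinatorial type) produces a boundary that still satisfies \emph{all} $d-1$ equalities, including the $i$-th one, because the grouped overlay $(h_1\oplus\cdots\oplus h_i)\oplus(h_{i+1}\oplus\cdots\oplus h_d)$ survives the perturbation. To violate only the $i$-th equality one must destroy the overlay structure across the cut $\{k\le i\}/\{k>i\}$ while preserving it across the other cuts, which is a perturbation of the type produced by \cite[lemma 2]{KTW}, not a rescaling. Second, and more importantly, even a correct family of such deformations only yields $\codim F\ge d-1$ (a strictly decreasing chain of faces); for that direction it is anyway easier to note that the $d-1$ functionals are given by nested label-sets and hence are linearly independent. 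What is missing entirely is the bound $\codim F\le d-1$, i.e.\ that $F$ contains a $(3n-d)$-dimensional family of points. This does not follow from intersecting $d-1$ facets (the paper's own remark about the octahedron shows such intersections can drop dimension too fast) and is where the real content of the extension of \cite[theorem 2]{KTW} lies: one perturbs each $h_a$ independently in its $(3\dim h_a-1)$-dimensional family, checks the perturbed configuration is still a clockwise overlay whose boundary therefore lies in $F$, and counts $\sum_a(3\dim h_a-1)=3n-d$. You need to supply this step.
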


The construction of $P$ from $h$ is straightforward:
each $3$-valent vertex in $h_i$ is replaced with an $i$-triangle, and 
each $4$-valent vertex is replaced either with two $i$-triangles (if the vertex
lies only on $h_i$) or an $(i,j)$-rhombus (if the vertex is where
$h_i$ and $h_j$ cross). The clockwise condition causes the puzzle rhombi to
have the required $i>j$ condition. An example is in figure \ref{fig:overlayex}.

\begin{figure}[ht]
  \centering
  \epsfig{file=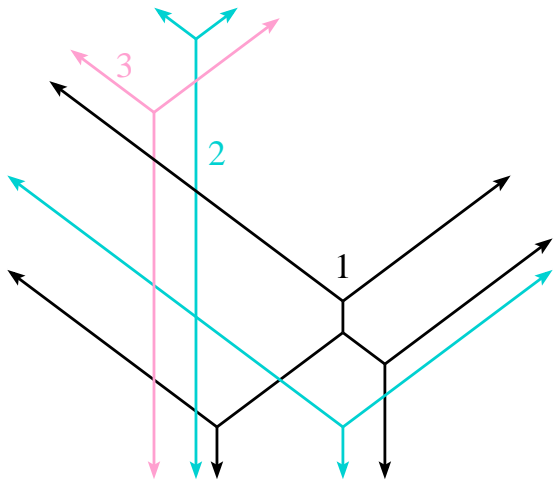,height=2.5in}
  \caption{A clockwise overlay $h_1\oplus h_2\oplus h_3$, whose associated
    BK-puzzle is the one in figure \ref{fig:puzex1}.
    The Grassmannian puzzles associated to 
    $h_1\oplus h_2, h_1\oplus h_3, h_2\oplus h_3$ are in
    figure \ref{fig:deflations}.}
  \label{fig:overlayex}
\end{figure}

\begin{Remark}
Borrowing terminology from the study of toric varieties, call a face $F$
of a polytope \defn{rationally smooth} if $F$ lies on only $\codim F$ many 
facets. Then lemma \ref{lem:lem4} implies the curious fact that $\LRn$
is rationally smooth on all its regular faces. (Note that this makes it easy to
describe the partial order on faces, as is done in 
\cite[Theorem D]{RessayreGIT2}.) This does not follow from
its interpretation as a moment polytope (see e.g. \cite[appendix]{KTW});
while the moment polytope of a full flag manifold has all rationally
smooth faces, the moment polytope of $Gr_2(\complexes^4)$ is an octahedron.
\end{Remark}

Combining lemmas \ref{lem:lem3} and \ref{lem:lem4}, we have

\begin{Theorem}\label{thm:thm3} (Extension of \cite[theorem 3]{KTW}.)
  Let $F$ be a codimension $d-1$ regular face of $\LRn$.
  Then there exists a BK-puzzle $P$ with $d$ labels, from which
  one can construct $d-1$ facets of $\LRn$ whose intersection is $F$.
\end{Theorem}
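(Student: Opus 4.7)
The plan is to combine the two preceding lemmas essentially as bookends. Let $F$ be a regular face of $\LRn$ of codimension $d-1$, and pick a generic point $b \in F$. First I would invoke Lemma~\ref{lem:lem3}, which produces a honeycomb $h$ with $\partial h = b$ that decomposes as a clockwise overlay
\[
   h = h_1 \oplus h_2 \oplus \cdots \oplus h_d
\]
of generic honeycombs with only simple degeneracies. The genericity of $b$ inside $F$ is what forces the number of summands to be exactly $d$ (not fewer), since by Lemma~\ref{lem:lem3} any honeycomb with fewer clockwise-overlay components would admit too large a space of boundary perturbations, contradicting the codimension being $d-1$.

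Next I would feed this clockwise overlay into Lemma~\ref{lem:lem4}. That lemma takes any clockwise overlay with $d$ summands and manufactures (i) a BK-puzzle $P$ with $d$ labels having $b$ on its boundary, and (ii) for each $i < d$ a Grassmannian puzzle $A_{i]}P$ whose associated inequality cuts out a regular facet of $\LRn$; moreover it asserts that the intersection of these $d-1$ facets is the face $F$ containing $b$. Taking $P$ and the $d-1$ facets $(A_{i]}P)_{1 \le i < d}$ as the output, we obtain precisely the conclusion of the theorem.

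Since both lemmas are already in hand, there is no new combinatorial or geometric content to grind through here. The only step one has to verify carefully is that the face produced by Lemma~\ref{lem:lem4} from the overlay of Lemma~\ref{lem:lem3} is the same face $F$ we started with: this is immediate because $b = \partial h$ lies in the relative interior of $F$ by choice, it lies on each of the $d-1$ facets produced, and by Lemma~\ref{lem:lem4} these facets intersect in a face of codimension exactly $d-1$, which must then coincide with $F$. The main conceptual obstacle is thus already absorbed into the proof of Lemma~\ref{lem:lem3} (forcing the overlay to have $d$ summands) and Lemma~\ref{lem:lem4} (the puzzle-assembly and identification of facets); the theorem itself is a one-line consequence of chaining them.
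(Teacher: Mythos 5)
Your proposal is correct and is essentially identical to the paper's argument: the paper proves this theorem in one line by combining Lemma~\ref{lem:lem3} (producing the clockwise overlay with $d$ summands from a generic point of $F$) with Lemma~\ref{lem:lem4} (producing the BK-puzzle and the $d-1$ facets cutting out $F$). Your extra check that the resulting codimension-$(d-1)$ intersection of facets coincides with $F$ is a reasonable detail to make explicit, but introduces nothing beyond the paper's intended chaining of the two lemmas.
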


To characterize the BK-puzzles arising this way, we need to adapt the
``gentle loop'' technology of \cite{KTW} for Grassmannian puzzles.  
%Define a \defn{region edge} to be a puzzle edge separating two pieces 
%of different types (in particular, not a boundary edge), and 
Orient the region edges as follows:
\begin{itemize}
\item If the edge is between a triangle and an adjacent rhombus,
  orient it toward the obtuse vertex of the rhombus.
\item If the edge is between an $(i,j)$-rhombus and an $(i,k)$-rhombus,
  $i>j>k$, orient it toward the obtuse vertex of the $(i,k)$-rhombus.
\item If the edge is between an $(i,k)$-rhombus and an $(j,k)$-rhombus,
  $i>j>k$, orient it toward the obtuse vertex of the $(i,k)$-rhombus.
\end{itemize}
Mnemonic: the rhombus with the greater spread takes precedence.
The BK-puzzle in figure \ref{fig:puzex1} has its region edges
oriented using this rule.

A \defn{gentle path} was defined in \cite{KTW}
as a path in this directed graph that, at each vertex,
either goes straight or turns $\pm 60^\circ$ (not $\pm 120^\circ$). 
For its generalization in this paper, we need an additional constraint:
if a vertex occurs as the intersection of two straight lines,
a gentle path through it {\em must go straight through.}
A \defn{gentle loop} is a gentle path whose first and last edges coincide
(edges not vertices -- the next turn after the last edge might not 
otherwise be gentle).

\begin{Proposition}\label{prop:prop2} (Extension of \cite[proposition 2]{KTW}.)
  Let $h = h_1\oplus h_2\oplus \cdots \oplus h_d$ be a clockwise overlay 
  of generic honeycombs, $P$ the corresponding BK-puzzle, 
  $\gamma = (\gamma_1,\ldots,\gamma_s)$ 
  a list of edges along a gentle path in $P$ of length $s>1$,
  and $\tilde\gamma$ the corresponding sequence of edges in $h$.
  Then the lengths of the honeycomb edges $(\tilde\gamma_i)$
  weakly decrease.
\end{Proposition}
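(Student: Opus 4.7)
The plan is to extend the gentle-path analysis from \cite[Proposition 2]{KTW} to the richer collection of puzzle pieces allowed here. The guiding observation is that at most vertices traversed by a gentle path only one honeycomb $h_i$ is ``doing anything locally,'' so by a single deflation we can reduce to the Grassmannian case already treated in loc.\ cit. The extra restriction in the definition of gentle path (that it must go straight through any vertex at which two straight lines meet) is exactly what is needed to deal with the remaining vertices, namely the honeycomb crossings.

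First I would set up the dictionary provided by Lemma \ref{lem:lem4}: each trivalent vertex of $h_i$ is an $i$-triangle of $P$; each crossing of $h_i$ with $h_j$ (with $i>j$) is an $(i,j)$-rhombus; and a honeycomb edge of $h_i$ of length $\ell$ becomes a chain of region edges in $P$ of combined length $\ell$, each of which separates two pieces involving the label $i$. The orientation rules on region edges given in the text are designed precisely so that, restricted to region edges coming from a fixed $h_i$, they agree with the KTW orientations after the deflation $D_{\{i,\ell\}}$ (for any other label $\ell$ present locally). In particular, the mnemonic ``the rhombus with greater spread takes precedence'' lines up with which of the two honeycombs is on the clockwise side at a crossing.

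With this dictionary in hand, I would do a local case analysis at each interior vertex $v$ of the path. If $v$ is not a crossing of two honeycomb lines, then every piece of $P$ incident to $v$ involves a common label $i$, and deflating by $D_{\{i,\ell\}}$ reproduces one of the local configurations treated in \cite[Proposition 2]{KTW}; the KTW argument then delivers the desired weak inequality between the lengths of $\tilde\gamma_k$ and $\tilde\gamma_{k+1}$ as edges of $h_i$. If instead $v$ lies at a crossing of $h_i$ and $h_j$, it corresponds to the interior of an $(i,j)$-rhombus, and the new ``go straight through'' rule forces the path to stay on a single honeycomb; the two consecutive edges either lie on the same honeycomb edge (equality of lengths) or the analysis reduces to Case~1 applied just off the crossing.

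The main obstacle is the combinatorial bookkeeping in Case~1: one must enumerate, for each of the three types of region edge listed in the orientation rules (triangle--rhombus, $(i,j)$ vs.\ $(i,k)$, and $(i,k)$ vs.\ $(j,k)$), the possible local pictures at $v$, match them against the figures in \cite{KTW}, and read off the length comparison. I expect no new phenomena beyond those already present in the Grassmannian case, but the verification that every local picture reduces cleanly under an appropriate $D_{\{i,\ell\}}$ deflation—and that the clockwise-overlay hypothesis supplies the right sign at each rhombus junction—will be the most delicate part of the write-up.
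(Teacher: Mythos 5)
Your reduction strategy has a genuine gap: pairwise deflation cannot handle the vertices where the proposition actually has content. A gentle path passes through puzzle \emph{vertices}, and a vertex admits a gentle path through it only when at least one rhombus is obtuse there; the configurations that then arise (this is exactly the classification the paper carries out) involve rhombi of types $(a,b)$, $(a,c)$, $(b,d)$, etc., with up to five distinct labels around a single vertex. Dually, such a vertex is a region of the overlay bounded by edges of three or more of the $h_i$, and the inequality to be proved rests on the identity that an in-pointing edge of, say, $h_a$ has length equal to the \emph{sum} of the lengths of edges of $h_b$, $h_c$, $h_d$ that the gentle path may continue along. This is an inherently multi-label statement about a closed polygon in the overlay; no single deflation $D_{\{i,\ell\}}$ retains all the edges involved, and deflation does not preserve gentleness of paths in any case (it collapses edges and changes adjacencies and angles at the vertex). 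Note also that consecutive edges $\tilde\gamma_k,\tilde\gamma_{k+1}$ typically lie on \emph{different} honeycombs --- e.g.\ the two region edges flanking the obtuse corner of an $(i,j)$-rhombus are an $i$-edge and a $j$-edge, and passing between them is a legal $60^\circ$ turn --- so the premise that ``only one honeycomb is doing anything locally'' fails precisely at the vertices a gentle path can traverse.

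Your Case 2 also misreads the geometry: a transverse crossing of $h_i$ and $h_j$ is dual to an $(i,j)$-rhombus \emph{piece}, not to a puzzle vertex, so gentle paths never ``pass through a crossing.'' The vertices at which two straight lines of region edges meet are dual to parallelogram-shaped overlay regions, and the ``go straight through'' requirement is needed there so that the entering edge's length is exactly accounted for by the edges the path is permitted to continue along (this is what makes the additivity, and hence the zero-tension statement of lemma \ref{lem:lem5}, come out right); it is not a device for reducing to the two-label case. The paper's proof is instead a direct classification of internal puzzle vertices --- using that edge labels read clockwise strictly decrease across obtuse rhombus angles, are constant across triangle angles, and strictly increase across acute rhombus angles, which caps the number of obtuse rhombi at a vertex at two --- followed by drawing the dual overlay polygon in each case and reading off the length relations for $s=2$. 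To complete your write-up you would need to carry out that multi-label classification rather than appeal to \cite{KTW}'s two-label result.
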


To prove this, we will study the possible internal vertices in a BK-puzzle, 
and use this classification to simultaneously prove

\begin{Lemma}\label{lem:lem5} (Extension of \cite[lemma 5]{KTW}.)
  Let $P$ be a BK-puzzle without gentle loops, and $v$ an internal vertex.
  Label each region edge meeting $v$ with the number of gentle paths
  starting at that edge and terminating on the BK-puzzle boundary. 
  Then these labels are strictly positive, and $v$ has zero tension.
\end{Lemma}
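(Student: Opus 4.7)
The plan is to prove Lemma~\ref{lem:lem5} hand-in-hand with Proposition~\ref{prop:prop2}, both resting on a finite classification of the possible internal vertex configurations of a BK-puzzle. So my first step would be to enumerate the local pictures at an internal vertex $v$. Since only the labels actually appearing on the six wedges incident to $v$ matter, I may deflate away all other labels via $D_S$ (Proposition~\ref{prop:deflation}) and assume the alphabet has size at most $3$. For each pair $i>j$ appearing at $v$, the restriction $D_{ij}P$ is a Grassmannian puzzle, and the vertex-types of Grassmannian puzzles are already listed in \cite{KTW}. Amalgamating these pairwise lists, subject to the rhombus rule $i>j$ and the orientation convention (greater spread takes precedence), yields a short finite list of possibilities for the local picture at $v$ in the general BK-puzzle.

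Next I would verify strict positivity of the labels. For a fixed region edge $e$ at $v$, I want at least one gentle path starting with $e$ that reaches the boundary of $P$. I claim every case in the vertex classification admits, for each incoming region edge, at least one outgoing gentle continuation (possibly forced to be the straight one when $e$ lies along a pair of crossing straight lines). Granting this, a gentle extension from $e$ can always be prolonged past any internal vertex; since $P$ has no gentle loops, no edge can be revisited, so by finiteness of the puzzle the extension must terminate on the boundary.

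Having produced at least one path from each edge, I would then establish the zero-tension identity $\sum_e N_e \hat e = 0$ at $v$, where $N_e$ counts gentle paths starting at $e$ and $\hat e$ is its outward unit vector. The mechanism is an in/out bijection on gentle paths passing through $v$: at each vertex-type in the classification, the ``go straight through'' rule on crossing straight lines, together with the $\pm 60^\circ$ turning option at three-valent corners, sets up a pairing of incoming path-germs with outgoing ones. Under this pairing, the contributions $N_e \hat e$ telescope in matched pairs whose unit vectors sum to zero (for straight-throughs) or to a balanced triple (for three-valent corners), exactly as in the Grassmannian case. Proposition~\ref{prop:prop2} is then a companion statement at each vertex: the honeycomb edges corresponding to a gentle path either stay constant in length at a straight-through, or drop in length at a $\pm 60^\circ$ turn, since the zero-tension condition on the generic honeycomb forces the downstream edge to be shorter.

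The main obstacle is the vertex classification itself: one must argue that although $d$ can be large, only a constant number of labels can appear around any one vertex, and then enumerate the resulting local configurations without omission. The subsidiary difficulty is the new ``must go straight through crossing straight lines'' stipulation in the definition of gentle path, which must be shown to be exactly the right rule to make the in/out pairing of path-germs balance at vertices that arise from a rhombus-rhombus collision of the form $(i,j)/(i,k)$ or $(i,k)/(j,k)$; this is where the orientation rule ``greater spread takes precedence'' earns its keep. Once both proofs are assembled vertex-type by vertex-type, Lemma~\ref{lem:lem5} and Proposition~\ref{prop:prop2} follow in tandem by induction on the number of puzzle pieces, peeling off a boundary triangle at each step.
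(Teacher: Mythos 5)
Your overall architecture matches the paper's (classify the internal vertex configurations, check that none is a ``gentle sink'' so that positivity follows from the absence of gentle loops plus finiteness, and prove the tension statement together with Proposition~\ref{prop:prop2}), but two of your key steps have genuine gaps. The first is the classification itself, which you flag but do not resolve, and whose proposed reduction is false: an internal vertex of a BK-puzzle can carry as many as five distinct labels on its incident edges (the paper's first configuration has labels ordered $a>b>c>d>e$), so you cannot deflate to a three-letter alphabet and preserve the local picture; moreover deflation collapses edges and merges vertices, so the vertex type of $P$ at $v$ is not recoverable by amalgamating the vertex types of the various $D_{ij}P$. The paper classifies directly: reading the labels clockwise around $v$, they strictly decrease across each obtuse rhombus corner, are constant across triangle corners, and strictly increase across acute rhombus corners; hence $v$ has $0$, $1$, or $2$ obtuse rhombus corners ($3$ is impossible), and the finitely many resulting configurations are then listed explicitly.

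The second gap is the zero-tension mechanism. Pairing incoming with outgoing path-germs so that contributions ``telescope in matched pairs whose unit vectors sum to zero'' fails: when a gentle path turns $\pm 60^\circ$ at $v$, the outward unit vectors of the two edges it uses sum to a nonzero vector, and there is no structural reason the counts organize into balanced triples. The paper's argument is instead dual to the honeycomb: $v$ corresponds to a closed polygonal region of the clockwise overlay, and the case-by-case check shows that the gentle-path counts satisfy exactly the additive relations (the count on an in-pointing edge equals the sum of the counts on the edges it points to) that the side lengths of that closed polygon satisfy --- this is precisely where the modified ``must go straight through crossing straight lines'' rule is used. Replacing the honeycomb edge lengths by the gentle-path counts therefore still yields a closed polygon, and closedness of the polygon is equivalent to vanishing of the tension at $v$. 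Your concluding induction ``peeling off boundary triangles'' is unnecessary once positivity is obtained from the no-sink property.
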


\begin{proof}
  An internal vertex $v$ of a puzzle may be an obtuse vertex of
  (a priori) $0,1,2$ or $3$ rhombi. Consider the labels on the edges
  meeting $v$, clockwise: they strictly decrease across obtuse angles,
  stay the same across acute angles from triangles, and
  strictly increase across acute angles from rhombi.

  So if there are no obtuse vertices at $v$, there can be no acute
  rhombus vertices either, just six $i$-triangles for the same $i$.
  Such a $v$ has no gentle paths going through it.

  There cannot be three obtuse vertices at $v$, as that would have
  three strict increases with no room for any decreases.

  That leaves either $1$ or $2$ obtuse vertices at $v$. We draw the
  possibilities up to rotation and puzzle duality
  that have no triangles at $v$, only acute rhombi.
  (As each triangle makes the situation simpler we leave those
  cases to the reader.) In each case the labels are ordered $a>b>c>d>e$,
  or possibly $a>c>b>d$ in the second case.

  \centerline{  \epsfig{file=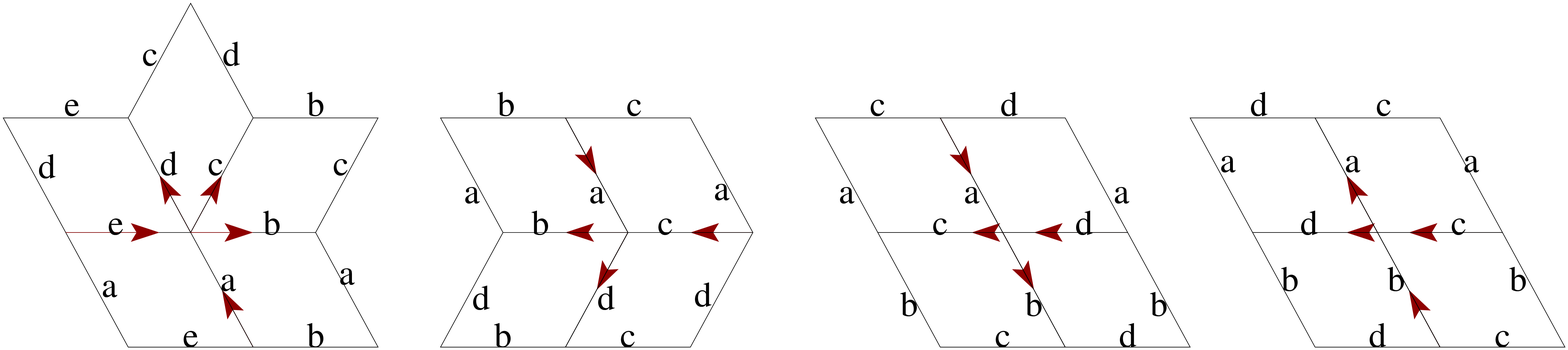,width=6.5in}}

  To prove the proposition, it suffices to check $s=2$. We draw the
  region (in the clockwise overlay) dual to the puzzle vertex.
  The arrows within are dual to $2$-step gentle paths.

  \centerline{  \epsfig{file=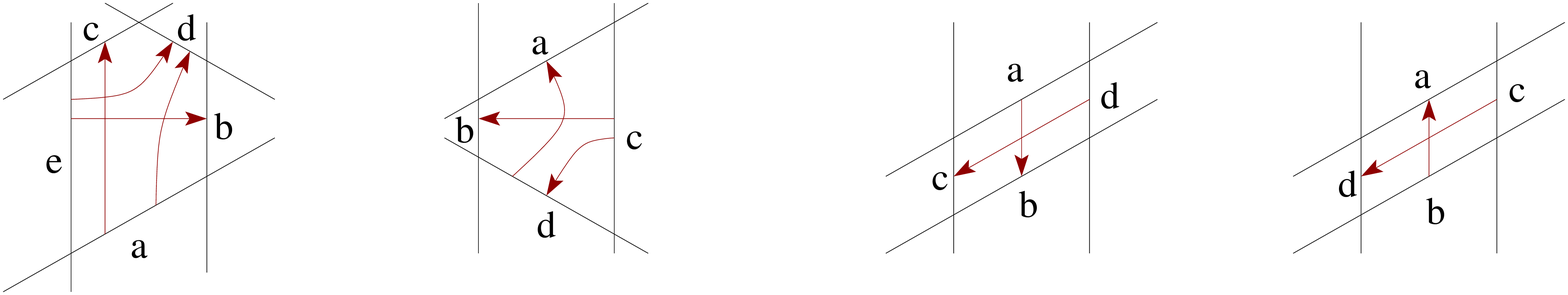,width=6.5in} }

  \begin{enumerate}
  \item None of these are ``gentle sinks'' -- any incoming path can be
    extended to be gently outgoing. This is why the number of gentle
    paths starting at an internal edge and terminating on the puzzle
    boundary is strictly positive.
  \item In each case, the length of an edge with an in-pointing arrow 
    is the sum of the lengths of the edges it points to.
    (This is where the modification of the \cite{KTW} definition 
    of ``gentle path'' is important.)
    Therefore if we change the length of 
    each honeycomb edge incident with a polygon to be
    the number of gentle paths emanating from it, possibly zero,
    the result is still a closed polygon.
    Dually, $v$ has zero tension.
  \qedhere
  \end{enumerate}
\end{proof}

\begin{Lemma}\label{lem:nogentleloops}(Extension of the corollary in
  \cite{KTW}.)
  BK-puzzles associated to clockwise overlays 
  have no gentle loops.
\end{Lemma}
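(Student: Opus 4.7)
The plan is a proof by contradiction. Suppose the BK-puzzle $P$ associated to a clockwise overlay $h = h_1 \oplus \cdots \oplus h_d$ has a gentle loop $\gamma = (\gamma_1, \ldots, \gamma_s)$ with $\gamma_{s+1} = \gamma_1$, and let $\tilde\gamma_i$ be the corresponding honeycomb edges. First I would apply Proposition~\ref{prop:prop2} to get the chain
\[
|\tilde\gamma_1| \,\geq\, |\tilde\gamma_2| \,\geq\, \cdots \,\geq\, |\tilde\gamma_{s+1}| \,=\, |\tilde\gamma_1|,
\]
which forces all of the honeycomb lengths $|\tilde\gamma_i|$ to be equal.

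Next I would revisit the vertex analysis carried out in the proof of Lemma~\ref{lem:lem5}. In the dual-polygon picture drawn there, each honeycomb edge $\tilde\gamma_i$ entering a puzzle vertex has length equal to the \emph{sum} of the lengths of all the honeycomb edges it points to, i.e.\ the admissible continuations of the gentle path at that vertex. Since the honeycombs $h_i$ are generic, every honeycomb edge has strictly positive length, so the equality $|\tilde\gamma_i| = |\tilde\gamma_{i+1}|$ forces $\tilde\gamma_i$ to point to $\tilde\gamma_{i+1}$ and to nothing else. Thus at every puzzle vertex visited by the gentle loop, the continuation of $\gamma$ is uniquely determined by the incoming edge.

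I would then inspect, case by case, the vertex classification from the proof of Lemma~\ref{lem:lem5} (the figures with one or two obtuse vertices, together with their duals). In each configuration the only in-pointing honeycomb edges with a unique outgoing target are those whose corresponding gentle continuation goes \emph{straight} through the puzzle vertex (a $0^\circ$ turn); a $\pm 60^\circ$ turn always coexists with other outgoing options of positive length. Consequently the gentle loop $\gamma$ must consist entirely of straight steps, and so it traces a genuine straight line segment in the plane.

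A straight line confined to the bounded puzzle triangle cannot close up into a loop—it would have to exit through a boundary edge—contradicting the fact that $\gamma$ is a loop. The main obstacle is the case-by-case verification in the third paragraph: one must go through every vertex type drawn in the proof of Lemma~\ref{lem:lem5} and check that ``unique forward continuation'' coincides with ``straight-through motion,'' which reduces to a finite inspection of the explicit list of admissible puzzle-vertex configurations (up to rotation and puzzle duality).
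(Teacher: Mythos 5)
Your reduction in steps 1--2 (Proposition \ref{prop:prop2} forces all the lengths $|\tilde\gamma_i|$ to be equal, and the sum rule from the proof of Lemma \ref{lem:lem5} together with strict positivity of honeycomb edge lengths then forces each $\tilde\gamma_i$ to point to $\tilde\gamma_{i+1}$ and to nothing else) is sound and matches the setup of the paper's argument. The gap is in step 3: the claim that \emph{a unique gentle continuation must be a straight continuation} is false, already for Grassmannian puzzles. Consider an internal vertex $v$ surrounded, in cyclic order, by a rhombus $R_1$ with its obtuse angle at $v$, a triangle $T_1$, a rhombus $R_2$ with an acute angle at $v$, and two further triangles (this is the one-obtuse-vertex case, with three triangles, that the proof of Lemma \ref{lem:lem5} ``leaves to the reader''). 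Writing $e_0,e_1,e_2,e_3$ for the four region edges at $v$, at directions $0^\circ,120^\circ,180^\circ,240^\circ$ say, the orientation rules make $e_0,e_1$ in-pointing and $e_2,e_3$ out-pointing; the continuation $(e_1,e_2)$ is a $120^\circ$ turn and hence not gentle, so the \emph{unique} gentle continuation of $e_1$ is $(e_1,e_3)$ --- a $60^\circ$ turn. Moreover the zero-tension (closing-up) condition at $v$ forces $\ell(e_1)=\ell(e_3)$ exactly, so this turning step is fully compatible with ``all lengths equal along the loop.'' Hence your conclusion that the loop consists only of straight steps, and therefore traces a straight segment, does not follow; the final ``a line cannot close up in a triangle'' step has nothing to stand on.

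This is precisely why the paper does not argue in the puzzle plane at all: its proof passes to the honeycomb, showing that each region of the clockwise overlay dual to a vertex of the loop must be a parallelogram traversed through opposite sides, so that the honeycomb edges $\tilde\gamma_i$ are parallel translates of one another and the chain of parallelograms is confined to a strip between two lines, which prevents closing up. To repair your argument you would need to handle the turning transitions above --- either by ruling them out globally (they cannot chain into a closed loop, which is a statement about the honeycomb geometry, not about the puzzle path's direction) or by reverting to the parallelogram-chain argument. As written, the case-by-case verification you defer to in your third paragraph would fail at exactly the triangle-containing vertices.
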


\begin{proof}
  Let $\gamma = (\gamma_1,\ldots,\gamma_s)$ be a path whose corresponding 
  honeycomb edges all have the same length. Comparing to the BK-puzzle
  vertex classification above, every corresponding honeycomb region
  must be a parallelogram. (One possibility to remember is the
  leftmost vertex, but with $b=c=d$ and the rightmost two rhombi replaced
  with triangles.)  Chaining these together, the parallelograms all
  lie between the same two lines, so $\gamma$ can not close up to a loop.

  But by proposition \ref{prop:prop2}, any gentle loop $\gamma$ will
  have all corresponding honeycomb edges of the same length. So there
  can be no such loops.
\end{proof}

\begin{Proposition}\label{prop:prop3} (Extension of \cite[proposition 3]{KTW}.)
  Let $P$ be a BK-puzzle of size $n$ with no gentle loops. Then there
  exists a clockwise overlay $h$ such that 
  the BK-puzzle that lemma \ref{lem:lem4} associates to $h$ is $P$.
  By this lemma, the $d-1$ inequalities defined by the BK-puzzle determine
  a regular face of $\LRn$.
\end{Proposition}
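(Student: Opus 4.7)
The plan is to construct the honeycomb $h$ as the dual of $P$, with edge multiplicities given by gentle-path counts, and then split $h$ along color lines into $h_1 \oplus \cdots \oplus h_d$.

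First, for each region edge $e$ of $P$, let $m(e)$ denote the number of gentle paths starting at $e$ and terminating on the puzzle boundary. By lemma \ref{lem:lem5}, $m(e) > 0$ and every internal puzzle vertex has zero tension. Interpreting the region decomposition dually---puzzle regions as honeycomb vertices, region edges as honeycomb edges with multiplicities $m(e)$, and internal puzzle vertices as honeycomb faces---the zero-tension condition is exactly what it takes for each face to close up, so $h$ is a well-defined bounded honeycomb on the size-$n$ triangle.

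Second, decompose $h$ as $h_1 \oplus \cdots \oplus h_d$. Each $i$-triangle region dualizes to a trivalent honeycomb vertex, which is placed in $h_i$. Each $(i,j)$-rhombus region dualizes to a four-valent crossing, which is placed in $h_i \cap h_j$ by assigning each pair of opposite edges at the crossing to a single honeycomb strand, in the unique way forced by the clockwise condition of lemma \ref{lem:lem3} together with the rhombus convention $i>j$. Global consistency---that following a honeycomb strand through successive crossings preserves its color, and that each $h_i$ is individually a honeycomb---is verified by inspecting the internal-vertex classification already enumerated in the proof of lemma \ref{lem:lem5}: in every case, the dual honeycomb polygon splits cleanly into color-homogeneous subpolygons, and the local orientation rules of \S \ref{sec:extremal} align the colors compatibly across shared strands.

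Finally, one verifies that applying the recipe of lemma \ref{lem:lem4} to $h_1 \oplus \cdots \oplus h_d$ returns $P$: trivalent vertices of $h_i$ dualize back to $i$-triangles and $h_i$--$h_j$ crossings dualize back to $(i,j)$-rhombi, and the boundary words are preserved by construction. The second sentence of the proposition is then immediate from lemma \ref{lem:lem4}. I expect the main obstacle to be the global color-consistency check of step two: this is precisely where the BK setting genuinely generalizes the Grassmannian case of \cite{KTW}, and it requires a finite but careful case analysis across the vertex types drawn in the proof of lemma \ref{lem:lem5}, together with an appeal to the orientation rules on region edges to guarantee that the clockwise condition holds uniformly.
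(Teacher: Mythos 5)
Your construction is exactly the paper's: the paper's (two-sentence) proof likewise builds $h$ dually to $P$ by assigning each region edge a length equal to its gentle-path count, with lemma \ref{lem:lem5} supplying the zero tension needed for the dual polygons to close up and the strict positivity needed for the overlay to be transverse. Your proposal is a correct, more detailed elaboration of that same argument (only note that the edge counts play the role of \emph{lengths} in a honeycomb in the sense of \cite{Hon1}, not multiplicities of a bounded honeycomb).
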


The edges in $h$ are assigned lengths according to the number of 
gentle paths starting at their corresponding puzzle edges.
To know that the resulting $h$ is a {\em transverse} clockwise overlay
requires the strict positivity in lemma \ref{lem:lem5}.

\begin{Theorem}\label{thm:thm5} (Extension of \cite[theorem 5]{KTW}.)
  \begin{enumerate}
  \item There is a $1:1$ correspondence between BK-puzzles of size $n$ 
    without gentle loops and regular faces of $\LRn$.
  \item (An analogue of \cite[Theorem D]{RessayreGIT2}.)
    One face $F_1$ contains another, $F_2$, if the corresponding BK-puzzle $P_1$
    is an ambiguation $A_\sim P_2$ of the BK-puzzle $P_2$.
  \end{enumerate}
\end{Theorem}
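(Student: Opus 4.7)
The plan is to splice together Proposition \ref{prop:prop3} with Lemmas \ref{lem:lem3}, \ref{lem:lem4}, and \ref{lem:nogentleloops} to produce mutually inverse maps between BK-puzzles without gentle loops and regular faces of $\LRn$, and then interpret ambiguation combinatorially to establish the order statement in (2).

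For part (1), I would define $P \mapsto F(P)$ by applying Proposition \ref{prop:prop3} to build a clockwise overlay $h$ from a gentle-loop-free BK-puzzle $P$, then Lemma \ref{lem:lem4} to extract the regular face $F(P)$ cut out by the $d-1$ facets corresponding to the Grassmannian deflations $A_{i]} P$. The reverse map $F \mapsto P(F)$ picks a generic $b \in F$, uses Lemma \ref{lem:lem3} to produce a clockwise overlay $h_F$ with $\partial h_F = b$, passes to the associated BK-puzzle via Lemma \ref{lem:lem4}, and appeals to Lemma \ref{lem:nogentleloops} for the no-gentle-loops property. That $P(F(P)) = P$ is essentially the content of Proposition \ref{prop:prop3}; for $F(P(F)) = F$, the key point is that on a fixed regular face the combinatorial type of a generic clockwise overlay is constant, so $P(F)$ is well-defined independent of $b$, and the face it cuts out must again be the unique codimension-$(d-1)$ regular face through $b$, namely $F$.

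For part (2), the easy direction runs as follows: if $P_1 = A_\sim P_2$, then each Grassmannian deflation $A_{i]} P_1$ already appears among the $A_{i]} P_2$'s (namely those $i$ that are a maximum of an equivalence class of $\sim$), so the $d'-1$ facets cutting out $F(P_1)$ form a subset of the $d-1$ cutting out $F(P_2)$, yielding $F_1 \supset F_2$. For the converse, I would pick a generic $b \in F_2 \subset F_1$, write $h_2 = h^{(1)} \oplus \cdots \oplus h^{(d)}$ from Lemma \ref{lem:lem3} applied to $F_2$, and use the containment $F_1 \supset F_2$ to identify the equivalence relation $\sim$ on $\{1,\dots,d\}$ telling us which facets drop on $F_1$; grouping $h_2$ accordingly into $\bigoplus_m \bigl( \bigoplus_{j \in S_m} h^{(j)} \bigr)$ should yield a clockwise overlay realizing $F_1$ whose associated BK-puzzle is $A_\sim P_2$. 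The main obstacle will be this converse: verifying that the grouped overlay is still a clockwise overlay in the transverse sense required by Lemma \ref{lem:lem4}, and that the BK-puzzle it produces is indeed $A_\sim P_2$. This should follow from the vertex classification in the proof of Lemma \ref{lem:lem5}, which controls what happens combinatorially when adjacent labels are merged at each type of internal vertex.
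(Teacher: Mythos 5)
Your proposal is correct and follows essentially the same route as the paper, whose proof of (1) is precisely the combination of Lemma \ref{lem:lem3}, Lemma \ref{lem:lem4}, and Proposition \ref{prop:prop3} (with Lemma \ref{lem:nogentleloops} supplying the no-gentle-loops property), and whose proof of (2) is your ``easy direction'' via Lemma \ref{lem:lem4}. Note that the statement of (2) only asserts the implication ``ambiguation $\Rightarrow$ containment,'' so the converse you flag as the main obstacle is not actually required here.
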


\begin{proof}
  Claim (1) is a combination of lemma \ref{lem:lem3}, theorem \ref{lem:lem4},
  and proposition \ref{prop:prop3}.
  Claim (2) follows from lemma \ref{lem:lem4}.
\end{proof}

\begin{Remark}
The BK-puzzles for full flags (no repeated edge labels on a side) 
correspond to $2n$-dimensional regular faces. In the Hermitian sum context,
if we fix $\lambda$ and $\mu$, these become regular vertices and 
correspond to sums of \emph{commuting} Hermitian matrices. So they were
easy to study, historically, and people found many of the inequalities
on $\LRn$ by looking nearby these vertices.

One might hope, then, that every regular facet of $\LRn$ contains one of these
$2n$-dimensional regular faces.
A counterexample is provided by the unique puzzle in
$\Delta_{12112,12112}^{21121}$, as any attempt to disambiguate the $2$s 
inside a finer BK-puzzle breaks the $\inv_{ij}$ counts. Correspondingly,
in the overlay $h_1\oplus h_2$ pictured here, \\
\centerline{  \epsfig{file=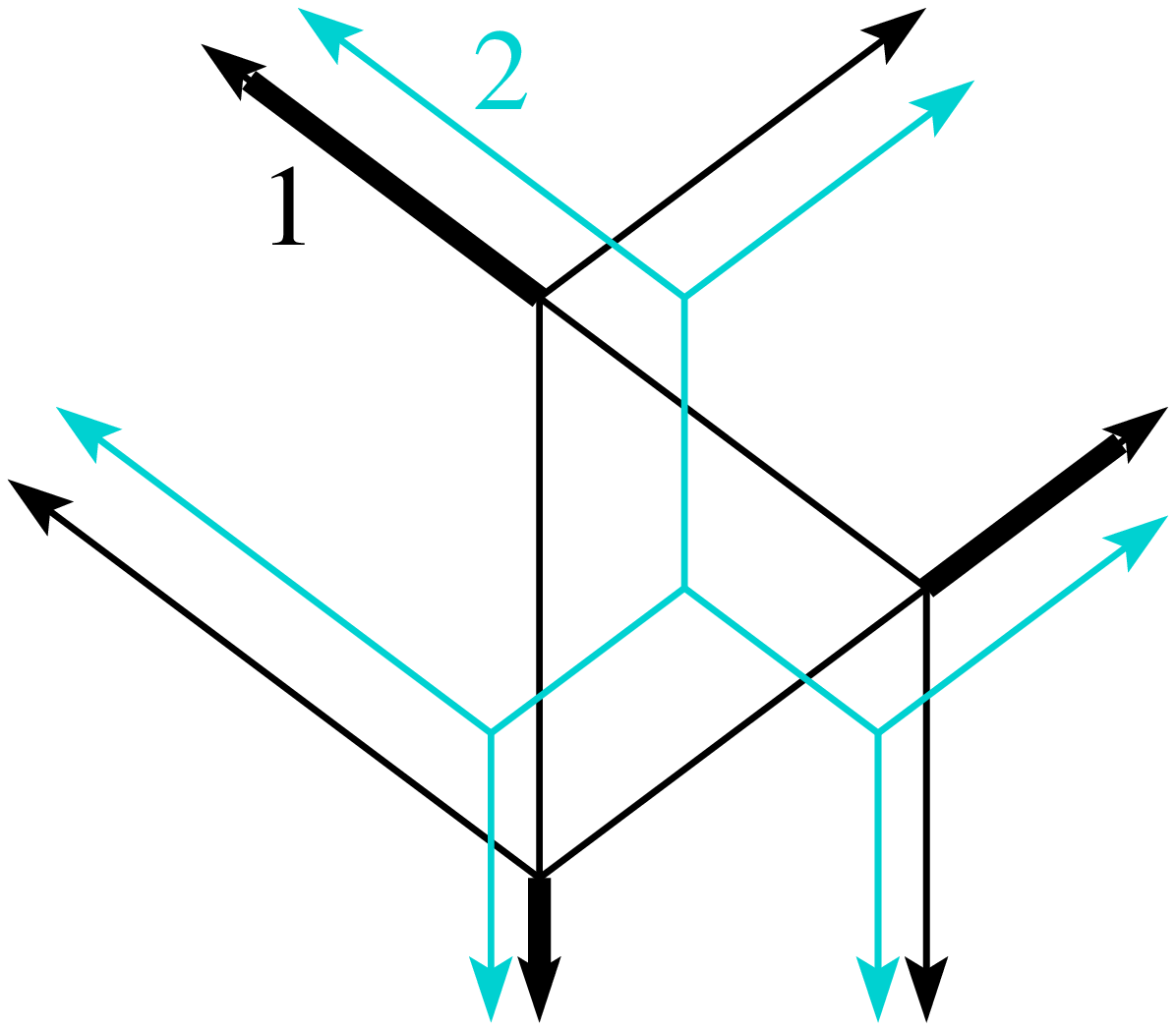,height=1.5in} }
none of the internal edges of $h_2$ can be shrunk to points while keeping
$h_1\oplus h_2$ transverse, as those edges all cross edges of $h_1$.
\end{Remark}

\begin{Theorem}\label{thm:thm67} (Extension of \cite[theorems 6 and 7]{KTW}.)
  A BK-puzzle is rigid iff it has no gentle loops.
\end{Theorem}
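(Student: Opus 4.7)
The plan is to prove both directions separately: the easy direction (no gentle loops implies rigid) by assembling the correspondences already established in this section, and the hard direction (gentle loop implies not rigid) by a local combinatorial move adapted from \cite{KTW}.

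\textbf{No gentle loops $\Rightarrow$ rigid.} This follows directly by composing known bijections. By theorem~\ref{thm:thm5}(1), BK-puzzles of size $n$ without gentle loops are in bijection with regular faces of $\LRn$. On the other hand, \cite[Theorem D]{RessayreGIT1}, combined with theorem~\ref{thm:BKcountspuzzles}, identifies regular faces of $\LRn$ with triples $(\pi,\rho,\sigma)$ for which $\BKc_{\pi\rho}^\sigma = 1$, i.e., with rigid BK-puzzles. Composing gives: a gentle-loop-free $P \in \Delta_{\pi\rho}^\sigma$ satisfies $\BKc_{\pi\rho}^\sigma = 1$, hence $|\Delta_{\pi\rho}^\sigma| = 1$, so $P$ is rigid. (One can also argue in a self-contained way using proposition~\ref{prop:prop3}: the clockwise overlay $h$ reconstructed from $P$ is determined by $\partial P$ together with the gentle-path counts, and this reconstruction is reversible via lemma~\ref{lem:lem4}.)

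\textbf{Gentle loop $\Rightarrow$ not rigid.} Given $P \in \Delta_{\pi\rho}^\sigma$ containing a gentle loop, I will produce a distinct $P' \in \Delta_{\pi\rho}^\sigma$. Following the KTW strategy, I first select a \emph{minimal} gentle loop $\gamma$ in $P$ (one enclosing no smaller gentle loop). By the vertex classification carried out in the proof of lemma~\ref{lem:lem5}, each vertex of $\gamma$ locally falls into one of the listed configurations; equivalently, in the dual clockwise overlay (if $P$ came from one), the vertices of $\gamma$ bound a chain of parallelograms, as in the proof of lemma~\ref{lem:nogentleloops}. This chain of parallelograms admits a one-parameter family of deformations given by shifting the parallel sides while keeping the external honeycomb edges fixed; reinflating any nontrivial member of this family via lemma~\ref{lem:lem4} yields a second BK-puzzle $P' \neq P$ with the same boundary. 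In purely combinatorial terms, this is a ``gentle loop swap'': the rhombi and triangles adjacent to $\gamma$ are locally exchanged according to the type of the vertex of $\gamma$.

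\textbf{Main obstacle.} The difficulty is in carrying out the swap: \cite{KTW} dealt with only two piece types, whereas here one must verify the move for each of the BK-vertex types catalogued in the proof of lemma~\ref{lem:lem5}, and in particular check that the rhombus orientation requirement $i > j$ is preserved after the swap. The vertex classification itself (with its arrow conventions) is exactly set up to make these local checks uniform, but one must also confirm globally that the sheared configuration along the entire loop assembles into a valid BK-puzzle --- this is where the clockwise-overlay picture is indispensable, since it re-expresses the global consistency as the additivity of parallelogram side-lengths along the loop, an automatic consequence of the zero-tension property established in lemma~\ref{lem:lem5}.
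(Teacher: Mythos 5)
Your forward direction (no gentle loops $\Rightarrow$ rigid) is acceptable as stated, though note that routing it through \cite[Theorem D]{RessayreGIT1} forfeits the independence from Ressayre's work that motivates this section (the paper's own alternative is to generalize the honeycomb-based argument of \cite[theorem 7]{KTW}); one also has to check that the face attached to $P$ by theorem \ref{thm:thm5} is the face Ressayre attaches to the triple $\partial P$, which you gloss over.

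The genuine gap is in the hard direction. Your construction of $P'$ is carried out ``in the dual clockwise overlay (if $P$ came from one)'' by shearing a chain of parallelograms --- but a BK-puzzle containing a gentle loop is associated to \emph{no} clockwise overlay: that is exactly the content of lemma \ref{lem:nogentleloops}, and proposition \ref{prop:prop3} produces an overlay only in the gentle-loop-free case. So the ``one-parameter family of deformations'' you invoke does not exist, and the chain-of-parallelograms picture from the proof of lemma \ref{lem:nogentleloops} is not available either (there it arises for loops in puzzles that \emph{do} come from overlays, which is precisely what is being ruled out). The actual argument of \cite[theorem 6]{KTW} is a purely combinatorial modification of the puzzle itself along a minimal gentle loop, with no honeycomb in sight, and it is the three-page case analysis of that modification --- extended to the larger list of vertex types in lemma \ref{lem:lem5}, with the check that the $i>j$ orientation on rhombi survives --- that constitutes the proof. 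Your ``Main obstacle'' paragraph names this verification but does not perform it, so the proposal does not establish the direction. (The paper itself declines to write out this case analysis, instead asking the reader to check that the \cite{KTW} construction generalizes or to invoke \cite[Theorem C]{RessayreGIT2}; if you wish to avoid the combinatorics, citing that theorem is the legitimate shortcut, not the honeycomb deformation.)
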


The hard direction, taking $3$ pages in \cite{KTW}, constructs a
new BK-puzzle $P'$ from a BK-puzzle $P$ and a minimal gentle loop.
We leave the reader to either check that those arguments generalize to
BK-puzzles, or to invoke \cite[Theorem C]{RessayreGIT2}.

The reader may be wondering about the redundant inequalities on $\LRn$
specified by nonrigid Grassmannian puzzles. Each one defines some face
of $\LRn$; what is the corresponding BK-puzzle? But \cite[theorem 8]{KTW}
says that these faces are never regular, so do not correspond to 
BK-puzzles.

\section{Rigid regular honeycombs}\label{sec:Fultonfaces}

As with puzzles, call a honeycomb \defn{rigid} if it is uniquely determined
by its boundary. 
These have received some study already;
under the deflation map linking honeycombs to puzzles,
these give the rigid puzzles indexing the regular facets of $\LRn$.
Fulton's conjecture (proven combinatorially in \cite{KTW} and
geometrically in \cite{RessayreFC,BKR}) is that an integral honeycomb
that is $\integers$-rigid is also $\reals$-rigid.

It is easy to see that the set of boundaries of rigid honeycombs 
is a union of faces of $\LRn$. In this theorem we characterize
which regular faces arise this way.

\begin{Theorem}\label{thm:rigidhoney}
  Let $h$ be a honeycomb such that $\partial h$ is regular.
  Then $h$ is rigid iff $h$ is 
  a clockwise overlay $h_1 \oplus \cdots \oplus h_d$ of honeycombs of
  size $1$ or $2$.
\end{Theorem}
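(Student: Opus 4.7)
The plan is to combine Lemma~\ref{lem:lem3}, which realizes every honeycomb with generic regular boundary as a clockwise overlay, with the rigidity of honeycombs of size $1$ or $2$.

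For the direction $(\Leftarrow)$, suppose $h = h_1 \oplus \cdots \oplus h_d$ is a clockwise overlay with each $h_i$ of size $1$ or $2$. I first observe that each summand is rigid: a size-$1$ honeycomb is a single tripod, uniquely determined by its boundary; a size-$2$ honeycomb has generic boundary (since $\partial h$ is regular), and since every $GL_2$ Littlewood--Richardson coefficient is $0$ or $1$, $\integers$-rigidity combined with Fulton's conjecture from \cite{KTW} yields $\reals$-rigidity. Given any other honeycomb $h'$ with $\partial h' = \partial h$, I argue $h' = h$ by showing that $h'$ must share the same overlay decomposition. The boundary $\partial h$ lies on a regular face $F$ of $\LRn$ of codimension $d-1$; by Theorem~\ref{thm:thm5}(1), $F$ corresponds to a unique rigid BK-puzzle $P$, and the boundary labels of $P$ canonically partition the edges of $\partial h$ into the sub-boundaries $\partial h_1, \dots, \partial h_d$ of the layers. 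Applying Lemma~\ref{lem:lem3} to nearby generic boundaries on $F$ and taking a limit shows that $h'$ itself admits a clockwise overlay $h'_1 \oplus \cdots \oplus h'_d$ of the same combinatorial type, so $\partial h'_i = \partial h_i$; rigidity of each $h_i$ then forces $h'_i = h_i$, hence $h' = h$.

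For the direction $(\Rightarrow)$, suppose $h$ is rigid with $\partial h$ regular, lying in the relative interior of a regular face $F$ of codimension $d-1$. Applying Lemma~\ref{lem:lem3} to generic points of $F$ and passing to a limit produces some clockwise overlay $h_1 \oplus \cdots \oplus h_d$ with boundary $\partial h$; by rigidity of $h$, this overlay must equal $h$ itself. Each summand $h_i$ is then rigid: a nontrivial deformation $\tilde h_i$ of $h_i$ fixing $\partial h_i$ would, by the openness of the clockwise overlay condition under small perturbations, give a distinct honeycomb $h_1 \oplus \cdots \oplus \tilde h_i \oplus \cdots \oplus h_d$ with the same boundary as $h$, contradicting rigidity of $h$.

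The main obstacle is the final step: showing that no rigid generic honeycomb has size $n \geq 3$. The plan is to argue that generic honeycombs of size $\geq 3$ always admit a $1$-parameter family of boundary-preserving deformations, either by exhibiting an explicit mutable interior region (such as a central triangular or hexagonal subregion that can be continuously translated while preserving every boundary coordinate) or by a dimension count showing that the polytope of honeycombs with a given generic boundary has positive dimension in this range. This parallels the $(3m-1)$-dimensional perturbation space appearing in the proof of Lemma~\ref{lem:lem3} for non-clockwise-overlay honeycombs of size $m$, and ties the theorem back to the already-established correspondence between rigid BK-puzzles and regular faces of $\LRn$.
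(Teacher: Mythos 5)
There are two genuine gaps, one in each direction. In the $(\Leftarrow)$ direction, rigidity requires showing that \emph{every} honeycomb $h'$ with $\partial h'=\partial h$ coincides with $h$, and for that you must first show that $h'$ \emph{itself} decomposes as an overlay of the same layers. Your limit argument does not do this: taking limits of the clockwise overlays supplied by Lemma \ref{lem:lem3} at nearby generic boundary points produces \emph{some} honeycomb with boundary $\partial h$ that is (a possibly degenerate limit of) an overlay, but it gives you no control over the arbitrary competitor $h'$. The paper closes this gap differently: since $\partial h'$ lies on the face $F$, it satisfies the $d-1$ puzzle \emph{equalities}, and by the proof of \cite[theorem 2]{KTW} this forces the honeycomb edges of $h'$ corresponding to puzzle rhombi to have length zero, which is exactly the statement that $h'$ decomposes as an overlay with the prescribed sub-boundaries; rigidity of the size-$1$ and size-$2$ pieces then finishes. (Incidentally, size-$2$ honeycombs are rigid for elementary reasons --- there are no interior degrees of freedom --- so the appeal to Fulton's conjecture is a red herring.)

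In the $(\Rightarrow)$ direction, the step you yourself flag as ``the main obstacle'' --- that no rigid generic honeycomb has size $\geq 3$ --- is precisely the content you would need to supply, and you only sketch a plan for it (the hexagon-breathing idea is correct but unproven here). The paper avoids needing this statement at all: it invokes \cite[Theorem 2]{Hon1}, which produces a honeycomb $h'$ with $\partial h'=\partial h$ having simple degeneracies whose underlying graph, after eliding the degeneracies, is \emph{acyclic}; the components of this forest automatically have size $1$ or $2$, so the decomposition into small layers comes for free. Rigidity then forces $h=h'$, and the perturbation arguments of \cite[lemmas 2 and 3]{KTW} show that the pairwise clockwise/counterclockwise relation is well defined and transitive, yielding the total order that makes $h$ a clockwise overlay. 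Your route could in principle be completed, but both the limit step (limits of transverse clockwise overlays can fail to be transverse) and the size-$\geq 3$ non-rigidity claim would need real proofs before the argument stands.
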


\begin{proof}
  Theorem 2 from \cite{Hon1} says that some $h'$ 
  with $\partial h' = \partial h$ has simple degeneracies, and if we
  elide them (thinking of them as a sort of local overlay, rather than
  actual vertices), the underlying graph of the resulting space is acyclic. 
  Let $\{h'_i\}$ be the components of this forest, so $h' = \bigoplus_i h'_i$ 
  (not yet necessarily clockwise).  Each $h'_i$ automatically has simple 
  degeneracies, and being acyclic, has size $1$ or $2$.

  $\implies$ 
  Now assume $h$ is rigid, so $h=h'$. 
  Then for any pair $h'_i,h'_j$, one must be clockwise of the other,
  exactly as in the proof of \cite[lemma 3]{KTW}.
  Since $h'_i,h'_j$ must intersect, who is clockwise of whom is
  uniquely determined.

  Moreover, the clockwiseness relation must be transitive, or else there is
  some triple $h'_i$, $h'_j$, $h'_k$ with 
  $h'_i$ meeting $h'_j$ clockwise at $p$, 
  $h'_j$ meeting $h'_k$ clockwise at $q$, 
  $h'_k$ meeting $h'_i$ clockwise at $r$. But then \cite[lemma 2]{KTW} 
  can perturb $h$ along the unique loop from $p$ to $q$ to $r$ to $p$.
  
  Hence ``clockwiseness'' is a total order on the $\{h'_i\}$.

  $\Longleftarrow$ 
  We wish to show that if $\partial h' = \partial h$, then $h'=h$. 
  Each of the $d-1$ puzzle \emph{equalities} satisfied by $\partial h'$
  force $h'$ to be an overlay, since by the proof of \cite[theorem 2]{KTW}
  the edges in $h$ corresponding to rhombi must be length $0$.
  So $h'$ is an overlay of these honeycombs of size $1$ and $2$,
  each of which is rigid. That is, $h$ and $h'$ are the same overlay
  of the same size $1$ or $2$ honeycombs.
\end{proof}

\begin{Corollary}
  The rigid regular faces of $\LRn$ correspond to non-vanishing
  Belkale-Kumar coefficients on partial flag manifolds where each
  $k_i - k_{i-1} \leq 2$. (By the product formula, 
  such coefficients are automatically $1$.)
\end{Corollary}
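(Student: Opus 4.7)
The plan is to chain the combinatorial correspondences established earlier in this section with Theorem \ref{thm:rigidhoney}. Each step reduces to a result already proved in the paper.

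First I would invoke Theorem \ref{thm:thm5}(1) to identify regular faces of $\LRn$ with BK-puzzles of size $n$ having no gentle loops (equivalently rigid BK-puzzles, by Theorem \ref{thm:thm67}). The BK-puzzle $P$ attached to a regular face $F$ is the one constructed in Lemma \ref{lem:lem4} from a clockwise overlay $h = h_1 \oplus \cdots \oplus h_d$ whose boundary $\partial h$ is a generic point of $F$, and the number $n_i$ of $i$-labels along each side of $P$ coincides with the size of $h_i$. In particular $P$ is a BK-puzzle on the partial flag manifold $\Fl(k_1,\ldots,k_d)$ with $k_i - k_{i-1} = n_i$.

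Next I would apply Theorem \ref{thm:rigidhoney} to detect when $F$ is rigid: since $\partial h$ is regular, $h$ is uniquely determined by its boundary iff each $h_i$ has size $1$ or $2$, which by the previous paragraph is exactly the requirement $k_i - k_{i-1} \leq 2$ for all $i$. Finally, Theorem \ref{thm:BKcountspuzzles} translates existence of a BK-puzzle $P$ with boundary $(\pi,\rho,\sigma)$ into non-vanishing of $\BKc_{\pi\rho}^\sigma$. Assembling the three steps gives the claimed bijection.

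For the parenthetical ``automatically $1$'' assertion I would appeal to the factorization in Theorem \ref{thm:fineproduct}: when all $n_i \leq 2$, each deflated triple $D_{ij}\pi, D_{ij}\rho, D_{ij}\sigma$ consists of words in two letters with at most $2$ of each, so the Grassmannian LR coefficients $c_{D_{ij}\pi,D_{ij}\rho}^{D_{ij}\sigma}$ are all $0$ or $1$ (the partitions in play fit in a $2\times 2$ box, where Schur multiplication is multiplicity-free), and their product in Theorem \ref{thm:fineproduct} is again $0$ or $1$. The only concrete verification required is this last multiplicity-free statement; the remainder of the argument is pure bookkeeping over the three bijections listed above, with the genuine rigidity work having already been carried out inside Theorem \ref{thm:rigidhoney}.
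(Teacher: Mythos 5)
Your proposal is correct and follows the route the paper intends: the corollary is stated as an immediate consequence of Theorem \ref{thm:rigidhoney} combined with the face/BK-puzzle correspondence of \S\ref{sec:extremal} and the product formula, which is exactly the chain you assemble. Your observation that all three deflated partitions fit in a $2\times 2$ box (where Littlewood--Richardson multiplication is multiplicity-free) is the right justification for the parenthetical ``automatically $1$'' claim.
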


Ressayre points out to us that this result is implicit
in the proof of \cite[Theorem 8]{RessayreCfree}.

\bibliographystyle{alpha}    % it seems this does nothing.

\end{document}